\newcommand{\colvec}[2][.8]{%
  \scalebox{#1}{%
    \renewcommand{\arraystretch}{.8}%
    $\begin{pmatrix}#2\end{pmatrix}$%
  }
}
\newcommand{\ra}{\rightarrow}
\newcommand{\Hbb}{\mathbb{H}}
\newcommand{\C}{\mathbb{C}}
\newcommand{\Z}{\mathbb{Z}}
\newcommand{\Dfrak}{\mathfrak{D}}
\newcommand{\ffrak}{\mathfrak{f}}
\newcommand{\ov}{\overline}
\def\endproof{\relax\ifmmode\expandafter\endproofmath\else
  \unskip\nobreak\hfil\penalty50\hskip.75em\hbox{}\nobreak\hfil\bull
  {\parfillskip=0pt \finalhyphendemerits=0 \bigbreak}\fi}
\def\endproofmath$${\eqno\bull$$\bigbreak}
\def\bull{\vbox{\hrule\hbox{\vrule\kern3pt\vbox{\kern6pt}\kern3pt\vrule}\hrule}}
\newcommand{\Ker}{\mathrm{Ker}}
\newcommand{\Coker}{\mathrm{Coker}}
\newtheorem{thm}{Theorem}[section]
\newtheorem{prop}[thm]{Proposition}
\newtheorem{lem}[thm]{Lemma}
\newtheorem{remark}[thm]{Remark}
\newtheorem{quest}{Question}[section]
\newcommand{\HF}{\mathrm{HF}}
\newcommand{\HFK}{\mathrm{HFK}}
\newcommand{\CFK}{\mathrm{CFK}}
\newcommand{\rank}{\mathrm{rank}}
\newcommand{\Image}{\text{Im}}
\begin{document}
\title[Heegaard Floer homology, degree-one maps]
{Heegaard Floer homology, degree-one maps and splicing 
knot complements}%
\author{Narges Bagherifard, Eaman Eftekhary}%
\address{Department of Mathematical Sciences, Sharif 
University of Technology, Azadi Ave., Tehran, Iran}
\email{n.bagherifard@gmail.com}
\address{School of Mathematics, Institute for Research in 
Fundamental Science (IPM),
P. O. Box 19395-5746, Tehran, Iran}%
\email{eaman@ipm.ir}
\date{January 2018}%
\begin{abstract}
Let $K$ denote a knot inside the homology sphere $Y$ and 
$K'$ denote a knot inside a homology sphere $L$-space. Let 
$X=Y(K,K')$ denote  the 3-manifold obtained by splicing the 
complements of $K$ and $K'$. We show that 
$\rank(\widehat{\mathrm{HF}}(X)) 
\ge \rank(\widehat{\mathrm{HF}}(Y))$.
\end{abstract}
\maketitle

\section{Introduction}
For closed, connected and oriented 3-manifolds $Y_1$ 
and $Y_2$ we say that $Y_1$ $k$-dominate $Y_2$ if there is a map 
$f:Y_1 \rightarrow Y_2$ such that
$f_*:\mathbb{Z}=\mathrm{H}_3(Y_1,\mathbb{Z}) 
\rightarrow \mathrm{H}_3(Y_2,\mathbb{Z})=\mathbb{Z}$ is 
multiplication by $k$. The map $f$ is then called a degree $k$ map. 
$1$-dominance gives a partial ordering of closed oriented 
$3$-manifolds. For every 
topological invariant $\sigma$, the following question is natural.
{\emph{If $Y_1$ $1$-dominates $Y_2$, is 
$\sigma(Y_1)$ at least as large as $\sigma(Y_2)$?}}
In many cases this question has positive answers, 
e.g. $\sigma(Y_1) \ge \sigma(Y_2)$ when $\sigma$ is either the 
rank of $\pi_1$,  or Haken number, 
or Gromov's simplicial volume \cite{Gromov-book}. 
In a sense, $Y_1$ is more complex than $Y_2$ when $Y_1$ 
$1$-dominate $Y_2$. \\

One of the powerful topological invariants for closed 3-manifolds, 
introduced  by Ozsv\'ath and Szab\'o \cite{OS-3m1},  is Heegaard 
Floer homology. Different versions of this homology for a closed 
$3$-manifold $Y$  are denoted by $\widehat{\text{HF}}(Y)$, 
$\text{HF}^+(Y)$, $\text{HF}^-(Y)$, $\text{HF}^\infty(Y)$ and 
$\text{HF}_{\text{red}}(Y)$. It is interesting to investigate if 
there is a positive answer to the above question for Heegaard 
Floer homology. In particular, one would like to investigate the 
following question
\begin{quest} \label{que:1}
Is it true that if $Y_1$ $1$-dominates $Y_2$ then 
$\mathrm{rank}(\widehat{\mathrm{HF}}(Y_1)) 
\ge \mathrm{rank}(\widehat{\mathrm{HF}}(Y_2))$?
\end{quest} 
One attempt is taken by Karakurt and Lidman \cite{Karakurt-Lidman}. 
They prove that if $f :Y_1\rightarrow Y_2$ is a map between 
Seifert fibered 
homology spheres, then
\[\text{rank}(\HF_{\text{red}}(Y_1))\ge\left|\text{deg}(f)\right| 
\text{rank}(\HF_{\text{red}}(Y_2)).\]

Let $X_K$ denote the complement of a tubular neighborhood of a 
knot $K$ in a homology sphere $Y$. Let $\mu_K$ and $\lambda_K$ 
denote the meridian and Seifert longitude of $K$, respectively, 
viewed as curves in $\partial X_K$. For knots $K_1$ and $K_2$ 
in the homology spheres $Y_1$ and $Y_2$ (respectively), let 
$Y=Y(Y_1,Y_2)$ denote the manifold obtained by gluing $X_{K_1}$ 
and $X_{K_2}$ via an orientation-reversing diffeomorphism 
$\phi:\partial X_{K_1} \rightarrow \partial X_{K_2}$ taking 
$\mu_{K_1}$ to $\lambda_{K_2}$ and $\lambda_{K_1}$ to $\mu_{K_2}$. 
We say that $Y=Y(K_1,K_2)$ is obtaind by splicing the knot 
complements $X_{K_1}$ and $X_{K_2}$. The Mayer-Vietoris sequence 
shows that $Y(K_1, K_2)$ is a homology sphere. The image of 
$\partial X_{K_1}$ is incompressible in $Y(K_1, K_2)$ if and only 
if the knots $K_1$ and $K_2$ are both nontrivial. There is a 
natural degree-one map $f_i: Y(K_1, K_2) \rightarrow Y_i$. One can 
restrict attention to this case and ask whether the rank of 
$\widehat\HF(Y(K_1,K_2))$ is greater than or equal to the rank of 
$\widehat\HF(Y_i)$, $i=1,2$.
We prove a number of upper bounds on the rank of 
$\widehat\HF(Y(K_1,K_2))$ in this paper. In particular, we answer the 
aforementioned question positively if $Y_2$ is an $L$-space.

\begin{thm} \label{thm:main}
Let $Y_1$ and $Y_2$ be integral homology spheres  
and $K_i$ denote a knot in $Y_i$ for $i=1,2$. 
If $Y_2$ is an $L$-space then
\begin{equation}\label{eq:main-inequality}
\rank(\widehat{\mathrm{HF}}(Y(K_1,K_2))) 
\ge \rank(\widehat{\mathrm{HF}}(Y_1)).
\end{equation}
\end{thm}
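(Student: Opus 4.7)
The plan is to compute $\widehat{\HF}(Y(K_1,K_2))$ via a bordered Floer pairing along the splicing torus, and then to leverage the $L$-space hypothesis on $Y_2$ to compare the outcome with $\widehat{\HF}(Y_1)$.

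First I would represent each knot complement $X_{K_i}$ by its immersed multicurve $\gamma(K_i)$ in the punctured torus $T^2_\bullet \subset \partial X_{K_i}$ in the sense of Hanselman--Rasmussen--Watson; this data is equivalent to the knot Floer chain complex $\mathrm{CFK}^\infty(Y_i,K_i)$. Under the splice identification, which swaps meridians and longitudes of the two boundaries, the pairing theorem gives
$$\rank \widehat{\HF}(Y(K_1,K_2)) = i\bigl(\gamma(K_1),\gamma'(K_2)\bigr),$$
where $\gamma'(K_2)$ denotes $\gamma(K_2)$ transported to $\partial X_{K_1}$ via the splice diffeomorphism and $i(\cdot,\cdot)$ is the minimal geometric intersection number in $T^2_\bullet$. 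Filling $X_{K_1}$ along the meridional curve $\mu_{K_1}$ to recover $Y_1$ yields, by the same principle, $\rank \widehat{\HF}(Y_1) = i(\gamma(K_1),\mu_{K_1})$.

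Next I would exploit the $L$-space hypothesis on $Y_2$. Because $Y_2$ is a homology sphere $L$-space, the identity $\rank \widehat{\HF}(Y_2)=1=i(\gamma(K_2),\mu_{K_2})$ forces a strong structural constraint on $\gamma(K_2)$: it contains exactly one non-compact (``longitudinal'') component $\gamma_0$, whose class in $H_1(T^2_\bullet)$ is $[\lambda_{K_2}]$, together possibly with several compact, null-homologous components. Under the splice identification the distinguished component $\gamma_0'$ then represents the class $[\mu_{K_1}]$ on $\partial X_{K_1}$.

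Finally I would deduce the rank inequality. The key point is that $\gamma_0'$, being in the class $[\mu_{K_1}]$ and essential in $T^2_\bullet$, is freely homotopic to a curve covering $\mu_{K_1}$ transversely, so every geometric intersection of $\gamma(K_1)$ with $\mu_{K_1}$ persists as an intersection of $\gamma(K_1)$ with $\gamma_0'$. Combining this with the non-negative contributions from the remaining (compact) components of $\gamma'(K_2)$ gives $i(\gamma(K_1),\gamma'(K_2)) \ge i(\gamma(K_1),\mu_{K_1})$, which is exactly \eqref{eq:main-inequality}. The main obstacle is precisely this last step: the compact components of $\gamma'(K_2)$, although null-homologous, can in principle create cancelling bigons when paired with $\gamma(K_1)$, and one must verify that the minimal-position representative still retains at least $\rank\widehat{\HF}(Y_1)$ intersection points arising from the longitudinal component. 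This will require a careful analysis of the placement of the compact components relative to $\gamma_0'$ in $T^2_\bullet$, using positivity properties of the pairing that are enabled specifically by the $L$-space condition on $Y_2$.
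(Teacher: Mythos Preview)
Your approach via immersed curves is genuinely different from the paper's, which works directly with the splicing matrix $\mathfrak{D}$ of Proposition~\ref{prop:2} and carries out a lengthy case analysis using Proposition~\ref{prop:3} together with the filtration lemmas of Section~\ref{sec:lemmas}. An immersed-curve proof, if it worked, would be conceptually cleaner; but as written your argument has a real gap at the last step.

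The problematic sentence is: ``$\gamma_0'$, being in the class $[\mu_{K_1}]$ and essential in $T^2_\bullet$, is freely homotopic to a curve covering $\mu_{K_1}$ transversely, so every geometric intersection of $\gamma(K_1)$ with $\mu_{K_1}$ persists as an intersection of $\gamma(K_1)$ with $\gamma_0'$.'' Lying in the homology class $[\mu_{K_1}]$ does not make $\gamma_0'$ freely homotopic to $\mu_{K_1}$ in the \emph{punctured} torus (only in the closed torus), and free homotopy classes in $T^2_\bullet$ with the same homology can have very different minimal intersection numbers with a third curve. There is no general inequality $i(\gamma(K_1),C)\ge i(\gamma(K_1),\mu_{K_1})$ for $C$ merely homologous to $\mu_{K_1}$. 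So the inequality you need, $i(\gamma(K_1),\gamma_0')\ge i(\gamma(K_1),\mu_{K_1})$, is exactly what remains to be proved, and you have not used enough of the $L$-space hypothesis to prove it.

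Your diagnosis of the obstacle is also off. The compact (null-homologous) components of $\gamma'(K_2)$ cannot ``create cancelling bigons'': minimal intersection is computed component by component and each summand is non-negative, so extra components only help you. The actual difficulty is entirely in the shape of $\gamma_0'$ itself. What you have not exploited is the \emph{second} constraint coming from the $L$-space condition: after transporting, $\gamma_0'$ satisfies $i(\gamma_0',\lambda_{K_1})=1$, not just $[\gamma_0']=[\mu_{K_1}]$. This is much stronger --- it pins down $\gamma_0'$ to a curve that, in the cover in which $\lambda_{K_1}$ unwraps, runs monotonically between the ends and only wiggles around the punctures. A correct argument would have to analyse that picture (and also address possible nontrivial local systems on the curves, which you have not mentioned) to show that such a $\gamma_0'$ really does pick up at least as many intersections with $\gamma(K_1)$ as $\mu_{K_1}$ does. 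That analysis is the substance of the theorem in this framework, and it is not supplied here.
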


Moreover, we obtain strong restrictions on knot Floer homology of 
$K_1$ and $K_2$ when $Y_1$ and $Y_2$ are arbitrary homology spheres 
and the inequality of Equation~\ref{eq:main-inequality} is 
violated.\\

We use the splicing formula developed in 
\cite{Eftekhary-splicing} and \cite{Eftekhary-incompressible} by 
the second author. This splicing formula is reviewed in 
Section~\ref{sec:splicing}. We then use linear algebra in 
Section~\ref{sec:linear-algebra} 
to obtain restrictions on $K_1$ and $K_2$ when the inequality
of Equation~\ref{eq:main-inequality} is violated.
A number of lemmas are proved in Section~\ref{sec:lemmas} which 
relate the splicing formula to the double filtration on the 
knot Floer complexes associated with $K_1$ and $K_2$.
In Section~\ref{sec:proof} we combine the results of the previous 
sections to prove Theorem~\ref{thm:main}.

\section{Floer Homology and Splicing Knot Complements} 
\label{sec:splicing}
We recall some definitions and theorems from 
\cite{Eftekhary-incompressible}.
Let $K$ be a knot in a homology sphere $Y$. One can associate a 
doubly pointed Heegaard diagram $(\Sigma,\alpha,\beta;u,v)$ to 
$K \subset Y$. The markings $u$ and $v$ can be used to give the map 
\[\mathfrak{s} =\mathfrak{s}_{u,v}:\mathbb{T}_{\alpha}\cap 
\mathbb{T}_{\beta}\rightarrow\mathrm{\underline{Spin}^c}(Y,K)=\Z,\]
as defined in \cite{OS-knots}, where 
$\mathrm{\underline{Spin}^c}(Y,K)$ 
is the set of relative $\mathrm{Spin^c}$ structures for $(Y,K)$. Let 
\[C=C_K=\langle \left[ x,i,j \right] | \, x\in \mathbb{T}_\alpha \cap 
\mathbb{T}_\beta, \, \mathfrak{s}(x)-i+j=0 \rangle _{\mathbb{Z}}\]
denote the $\mathbb{Z} \oplus \mathbb{Z}$ filtered chain complex 
associated with $K$. As in \cite{OS-surgery}, we consider the 
sub-modules
\[ C \{ i=a , j=b \}, C\{ i=a, j\le b \} \quad  \text{and} 
\quad  C\{ i \le a, j=b \} \qquad  a,b \in \mathbb{Z} 
\cup \{\infty \} \]
equipped with the induced structure as a chain complex. Set 
$C \{i = a \}=C \{ i = a, j \le \infty \}$ and 
$C \{ j = b \}=C \{i \le \infty , j = b \}$. Let 
$ \Xi :C\{ i=0 \} \rightarrow C\{j=0 \}$ be the chain homotopy 
equivalence corresponding to the Heegaard moves which change 
the diagram $ (\Sigma ,\alpha ,\beta; u)$ to 
$ (\Sigma ,\alpha ,\beta; v)$. For any relative $\mathrm{ Spin^c}$  
class $\mathfrak{s} \in  \mathbb{Z}  
=\mathrm{ \underline{Spin}^c} (Y, K)$, consider the chain maps
\begin{align*}
& \textit{i}_n^\mathfrak{s}=i_n^\mathfrak{s}(K): C \{i 
\le \mathfrak{s} , j=0 \} \oplus C \{i=0, j \le n-\mathfrak{s}-1 \} 
\rightarrow C \{j=0 \} &\\
& \textit{i}_n^\mathfrak{s}(\left[x,i,0 \right],\left[y,0,j\right])
:=\left[x,i,0\right] + \Xi \left[y,0,j\right].
\end{align*} 
Let $ Y_n(K)$ denote the three-manifold obtained from $Y$ by 
$n$-surgery on $K$ and $K_n$ denote the corresponding knot 
inside $Y_n(K)$. The second author proves the following 
as \cite[Proposition 1.5]{Eftekhary-incompressible}:

\begin{prop}\label{prop:1}
The homology of the mapping cone $M(i_n^\mathfrak{s})$ gives
\[\mathbb{H}_n(K,\mathfrak{s})
=\widehat{\HFK}(Y_n(K),K_n, \mathfrak{s})\]
\end{prop}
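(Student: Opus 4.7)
The plan is to adapt the integer surgery formula of Ozsv\'ath-Szab\'o to the hat-version of knot Floer homology, with the computation carried out one relative $\mathrm{Spin}^c$ class at a time. The starting point is a doubly pointed Heegaard diagram for $(Y_n(K),K_n)$ obtained by modifying the diagram $(\Sigma,\alpha,\beta;u,v)$ for $(Y,K)$: one replaces the meridian $\mu$ of $K$ (which appears among the $\beta$-curves) by a curve $\gamma$ representing the $n$-framed longitude, while retaining $u$ and $v$ as basepoints on either side of $\gamma$. After winding an appropriate $\alpha$-curve sufficiently many times around $\mu$ and stretching the neck, all intersection points of $\mathbb{T}_\alpha$ with the new $\mathbb{T}_\gamma$ concentrate in two ``winding regions''.

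First, I identify the generators. The intersection points representing relative $\mathrm{Spin}^c$ class $\mathfrak{s}$ fall into two families according to which winding region they lie in. A bookkeeping argument based on the change in relative $\mathrm{Spin}^c$ class under surgery matches the first family with the generators of $C\{i\le\mathfrak{s},j=0\}$ and the second with those of $C\{i=0,j\le n-\mathfrak{s}-1\}$. This produces an isomorphism between the underlying module of $\widehat{\CFK}(Y_n(K),K_n,\mathfrak{s})$ and the underlying module of the mapping cone $M(i_n^\mathfrak{s})$.

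Next, I match the differentials by a holomorphic disk count. Disks whose domains are disjoint from the surgery region reproduce the internal differentials of the two subcomplexes, recovered respectively from the diagrams $(\Sigma,\alpha,\beta;u)$ and $(\Sigma,\alpha,\beta;v)$. The remaining disks sweep across the surgery region and produce maps from each of the two summands into $C\{j=0\}$; the contribution from $C\{i\le\mathfrak{s},j=0\}$ is the natural inclusion, while the contribution from $C\{i=0,j\le n-\mathfrak{s}-1\}$ is identified with $\Xi$, because these disks realize exactly the chain homotopy equivalence induced by the Heegaard moves that carry the basepoint $u$ to the basepoint $v$.

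The hardest step will be this last identification of the cross-term with $\Xi$. The argument rests on an admissibility input together with a neck-stretching degeneration: pinching along a suitable curve in the surgery region decomposes the relevant disks as concatenations of holomorphic triangles, and these triangles assemble into the composition of maps associated with the sequence of Heegaard moves from $(\Sigma,\alpha,\beta;u)$ to $(\Sigma,\alpha,\beta;v)$. Once this identification is in place, the chain complex computing $\widehat{\HFK}(Y_n(K),K_n,\mathfrak{s})$ coincides with $M(i_n^\mathfrak{s})$, and passing to homology yields $\mathbb{H}_n(K,\mathfrak{s})$ as claimed.
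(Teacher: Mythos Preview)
The paper does not supply its own proof of this proposition. Immediately before the statement it says ``The second author proves the following as \cite[Proposition 1.5]{Eftekhary-incompressible}'', and then simply quotes the result; there is no argument in the present paper to compare your proposal against.

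As to your outline itself: the strategy---build a doubly pointed diagram for $(Y_n(K),K_n)$ by replacing the meridian with an $n$-framed longitude, sort generators into two winding regions, and identify the three pieces of the mapping-cone differential via holomorphic disk counts---is the expected shape of such a proof and is in line with the large-surgery and integer-surgery formulas of Ozsv\'ath and Szab\'o. But what you have written is a plan, not a proof. Two places in particular need real work before this is convincing. First, the ``bookkeeping argument'' matching the two families of generators to $C\{i\le\mathfrak{s},j=0\}$ and $C\{i=0,j\le n-\mathfrak{s}-1\}$ requires an honest computation of how relative $\mathrm{Spin}^c$ structures transform under the surgery, together with a check that admissibility survives the winding; this is not automatic and is where the specific shape of the two filtration cutoffs comes from. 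Second, and more seriously, your identification of the cross-term with $\Xi$ by ``pinching along a suitable curve'' and decomposing into triangles is asserted rather than argued: you would need to specify the degeneration precisely, verify the gluing/compactness statements that allow the disk count to be replaced by a triangle count, and then show that the resulting triangle map really is the chain homotopy equivalence $\Xi$ associated to the Heegaard moves from the $u$-diagram to the $v$-diagram (and not merely some map homotopic to it, or a different map altogether). If you intend to fill this in, the cited reference \cite{Eftekhary-incompressible} is where the details live.
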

Note that $M(i_0^\mathfrak{s})$ is a sub-complex of both 
$M(i_1^\mathfrak{s})$ and $M(i_1^{\mathfrak{s}+1})$ and the quotient 
of $M(i_1^\mathfrak{s})$ by $M(i_0^\mathfrak{s})$  is isomorphic to 
$C\{ i=0, j=-\mathfrak{s}\}\simeq \widehat{\CFK}(K,\mathfrak{s})$.
The quotient of $M(i_1^\mathfrak{s})$ by $M(i_0^{\mathfrak{s}-1})$  
is $C\{i=\mathfrak{s},j=0\}=\widehat{\CFK}(K,\mathfrak{s})$. 
We thus obtain the following two short exact sequences 
\begin{align*}
& 0 \rightarrow M(i_0^\mathfrak{s}) 
\xrightarrow{F_\infty ^\mathfrak{s}} M(i_1^\mathfrak{s}) 
\xrightarrow{F_0 ^\mathfrak{s}} \widehat{\CFK}(K,\mathfrak{s}) 
\rightarrow 0  \ \ \  
\mathrm{and} \ \ \  0 \rightarrow M(i_0^\mathfrak{s-1}) 
\xrightarrow{\bar{F}_\infty ^\mathfrak{s}} M(i_1^\mathfrak{s}) 
\xrightarrow{\bar{F}_0 ^\mathfrak{s}} \widehat{\CFK}(K,\mathfrak{s}) 
\rightarrow 0 &
\end{align*} 
which give the following two homology exact triangles 
\[
\begin{tikzcd}
 & \mathbb{H}_\infty(\mathfrak{s}) 
 \arrow{dr}{\mathfrak{f}_1^{\mathfrak{s}}} \\
\mathbb{H}_1(\mathfrak{s}) \arrow{ur}{\mathfrak{f}_0^{\mathfrak{s}}}  
&& \arrow{ll}{\mathfrak{f}_\infty^{\mathfrak{s}}} 
\mathbb{H}_0(\mathfrak{s})
\end{tikzcd}
\quad \text{and} \quad
\begin{tikzcd}
 & \mathbb{H}_\infty(\mathfrak{s}) 
 \arrow{dr}{\bar{\mathfrak{f}}_1^{\mathfrak{s}}} \\
\mathbb{H}_1(\mathfrak{s}) 
\arrow{ur}{\bar{\mathfrak{f}}_0^{\mathfrak{s}}}  && 
\arrow{ll}{\bar{\mathfrak{f}}_\infty^{\mathfrak{s}}} 
\mathbb{H}_0(\mathfrak{s}-1)
\end{tikzcd}
\]
where 
$\mathbb{H}_\bullet(\mathfrak{s})
=\mathbb{H}_\bullet(K,\mathfrak{s})$. 
Changing the role of the two punctures of 
$(\Sigma,\alpha,\beta;u,v)$,
which corresponds to changing the orientation of $K$, gives the 
duality maps
$$ \tau_\bullet=\tau_\bullet(K): \mathbb{H}_\bullet(K) 
\rightarrow  \mathbb{H}_\bullet(K),\qquad\bullet\in\{0,1,\infty \}.$$
These duality maps take $\mathbb{H}_\bullet(K,\mathfrak{s})$ to 
$\mathbb{H}_\bullet(K,-\mathfrak{s})$ for $\bullet=1, \infty$, and to 
$\mathbb{H}_\bullet(K,1-\mathfrak{s})$ for $\bullet=0$. In  
a basis for $ \mathbb{H}_\bullet(K) $ where $ \mathfrak{f}_\bullet $ 
takes the block form $\colvec{ 0 & 0 \\ I & 0}$, 
the map $\tau_\bullet$ and its inverse $\tau_\bullet^{-1}$ take 
the following matrix block forms
$$ \tau_\bullet=\begin{pmatrix}
  A_\bullet & B_\bullet \\  C_\bullet & D_\bullet
\end{pmatrix}
\qquad \text{and} \qquad \tau_\bullet^{-1}=\begin{pmatrix}
  A_\bullet & B_\bullet \\  \ov{C}_\bullet & D_\bullet
\end{pmatrix}
\qquad\bullet \in \{0, 1, \infty \}.$$
We then have 
$\bar{\mathfrak{f}}_0
=\tau_\infty^{-1}\circ\mathfrak{f}_0\circ\tau_1$, 
$\bar{\mathfrak{f}}_1
=\tau_0^{-1}\circ \mathfrak{f}_1 \circ \tau_\infty$ and 
$\bar{\mathfrak{f}}_\infty=\tau_1^{-1}\circ \mathfrak{f}_\infty
\circ\tau_0$.
Let $a_\bullet=a_\bullet(K)$ denote the rank of 
$\mathfrak{f}_\bullet$ for 
$ \bullet=0, 1, \infty$. Then $a_1$, $a_\infty$ and $a_0+1$ have the 
same parity. Note that $B_0$, $B_1$ and $B_\infty$ are 
matrices of size $ a_\infty \times a_1$, $a_0 \times a_\infty$ and 
$a_1 \times a_0$, respectively.
Define $X_\bullet=X_\bullet(K)$ by $X_0=B_1 B_0 B_\infty$, 
$X_1=B_\infty B_1 B_0$ and $X_\infty=B_0 B_\infty B_1$. By 
\cite[Lemma 5.4]{Eftekhary-incompressible}, the sqaure of 
$X_\bullet$ is zero
for $ \bullet \in \{0, 1, \infty \} $. In particular, if the knot 
$K$ is non-trivial both the kernel and the cokernel of $X_\bullet$ 
are non-trivial.
In the following sections we need to makes several changes of basis 
to obtain suitable forms of the matrices $\tau_\bullet$ to simplify 
the computations. One way to do this is using the matrices 
$P_\bullet$ and $Q_\bullet$, where $P_\bullet$ is an invertible 
$a_\bullet \times a_\bullet$ 
matrix and the matrices $Q_\bullet$ are arbitrary matrices
of correct size. Then one can chooses a change of basis for either of 
$\mathbb{H}_0(K)$, $\mathbb{H}_1(K)$ and $\mathbb{H}_\infty(K)$,
called an {\emph{admissible}}  change of basis,
which is given by the invertible matrices
$$\mathbb{P}_0=
\begin{pmatrix}
 P_\infty & 0 \\
 Q_0      & P_1
\end{pmatrix}, \quad
\mathbb{P}_1=
\begin{pmatrix}
 P_0 & 0 \\
 Q_1      & P_\infty
\end{pmatrix}
\quad \mathrm{and} \quad
\mathbb{P}_\infty=
\begin{pmatrix}
 P_1 & 0 \\
 Q_\infty  & P_0
\end{pmatrix},
$$
respectively. The block forms 
$\ffrak_\bullet=\colvec{0&0\\ I&0}$
remain unchanged under such a change of basis.\\

 Let $Y=Y(K_1,K_2)$ denote the three-manifold 
obtained by splicing the complements of $K_1 \subset Y_1$ and 
$K_2 \subset Y_2$. Let 
$\Box_\bullet ^ \star=\Box_\bullet (K_\star)$, for 
$\Box\in\{ A,B,C,\ov{C},D,X,\tau\}$, $\bullet\in\{0,1,\infty\}$ and 
$\star \in {1,2}$. The following proposition is proved 
in \cite[Subsection 5.3]{Eftekhary-splicing}.

\begin{prop}\label{prop:2}
If $K_i$ is a knot inside the homology sphere $Y_i$ for $i=1,2$,
$$ \mathrm{rank}\; \widehat{\HF}(Y(K_1,K_2) ; 
\mathbb{F})=\mathrm{rank}\left(\mathrm{Ker}(\mathfrak{D})\right)
+\mathrm{rank}\left(\mathrm{Coker}(\mathfrak{D})\right)=:
h(\Dfrak),$$
where the matrix $\mathfrak{D}=\mathfrak{D}(K_1,K_2)$ is given by
\begin{equation}\label{eq:main-matrix}
\colvec[.7]{
D_\infty^1B_1^1\otimes B_1^2A_0^2&B_1^1A_0^1\otimes I&
B_1^1B_0^1\otimes I&D_\infty^1 A_1^1\otimes B_1^2A_0^2&I
\otimes B_1^2B_0^2&0\\ \hdashline
I\otimes B_\infty^2 B_1^2 &D_1^1A_0^1\otimes B_\infty^2 A_1^2 &
D_1^1B_0^1\otimes B_\infty^2 A_1^2& 0&B_0^1B_\infty^1\otimes I&
B_0^1 A_\infty^1\otimes I\\ \hdashline
I\otimes D_\infty^2 B_1^2& \begin{array}{c}I\otimes I+\\
D_1^1 A_0^1\otimes D_\infty^2 A_1^2\end{array}& 
D_1^1 B_0^1\otimes D_\infty^2 A_1^2 
&0&0&0\\ \hdashline
B_\infty^1 B_1^1\otimes I&0&I\otimes B_0^2B_\infty^2& 
B_\infty^1 A_1^1\otimes I
&\begin{array}{c} D_0^1 B_\infty^1\otimes B_0^2A_\infty^2\\ 
+X_1^1B_\infty^1\otimes B_0^2X_1^2\end{array}
&\begin{array}{c} D_0^1 A_\infty^1\otimes B_0^2A_\infty^2\\ 
+X_1^1A_\infty^1\otimes B_0^2X_1^2\end{array}\\ \hdashline
D_\infty^1 B_1^1\otimes D_1^2A_0^2&0&0&\begin{array}{c}I\otimes I+\\
D_\infty^1 A_1^1\otimes D_1^2A_0^2\end{array}
&I\otimes D_1^2B_0^2&0\\ \hdashline
0&0&I\otimes D_0^2B_\infty^2 &0&\begin{array}{c} 
D_0^1 B_\infty^1\otimes D_0^2A_\infty^2\\
+X_1^1B_\infty^1\otimes D_0^2X_1^2\end{array}&\begin{array}{c}
I\otimes I+\\ D_0^1A_\infty^1\otimes D_0^2 A_\infty^2\\
+X_1^1 A_\infty^1\otimes D_0^2X_1^2\end{array}
}.
\end{equation}
\end{prop}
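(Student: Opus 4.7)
The plan is to realize the splicing via a Heegaard diagram obtained by gluing doubly-pointed diagrams for the two knot complements, reduce the computation of $\widehat{\HF}(Y(K_1,K_2))$ to a double mapping cone built from the pieces of Proposition~\ref{prop:1}, and then identify the resulting two-term complex (at the level of homology) with a map whose matrix is $\mathfrak{D}$.

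More concretely, first I would choose admissible doubly-pointed Heegaard diagrams $(\Sigma_i,\alpha_i,\beta_i;u_i,v_i)$ for $(Y_i,K_i)$, remove small neighborhoods of the basepoints, and glue the resulting bordered surfaces along their boundary circles so as to implement the splicing identification $\mu_{K_1}\leftrightarrow\lambda_{K_2}$, $\lambda_{K_1}\leftrightarrow\mu_{K_2}$. Roughly, the splicing diffeomorphism swaps $u_1\leftrightarrow v_2$ and $v_1\leftrightarrow u_2$, so that the relative $\mathrm{Spin^c}$-filtration associated with the puncture pair on one side becomes the large-surgery filtration on the other. This produces a Heegaard diagram for $Y(K_1,K_2)$ adapted to the essential torus $\partial X_{K_1}=\partial X_{K_2}$.

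Next, apply the mapping cone description of Proposition~\ref{prop:1} on each side: for each pair $(\mathfrak{s}_1,\mathfrak{s}_2)$ of relative $\mathrm{Spin^c}$ classes, the corresponding piece of $\widehat{\CF}(Y(K_1,K_2))$ is identified with a suitable total complex of $M(i_n^{\mathfrak{s}_1}(K_1))\otimes M(i_n^{\mathfrak{s}_2}(K_2))$, with differentials assembled from $i_n^{\mathfrak{s}}$ and the chain homotopy equivalence $\Xi$. Summing over $\mathfrak{s}_1,\mathfrak{s}_2$ yields a global double mapping cone. The $\Hbb_0,\Hbb_1,\Hbb_\infty$ exact triangles, combined with the duality maps $\tau_\bullet$, then let me pass to homology and collapse the double cone into a single two-term complex between finite-dimensional vector spaces. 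Since the total homology of a two-term complex has rank $\rank\Ker+\rank\Coker$, this gives $\rank\widehat{\HF}(Y(K_1,K_2))=h(\mathfrak{D})$ for the resulting boundary matrix $\mathfrak{D}$.

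The hard part is pinning down the exact entries of $\mathfrak{D}$. The rows and columns are bookkept by the six surviving pieces of the tensor product of the two triangles (three trivial ones are killed by the block-upper-triangular form $\mathfrak{f}_\bullet=\colvec{0&0\\ I&0}$), and the entries encode how the maps $\mathfrak{f}_\bullet^\star$ and $\bar{\mathfrak{f}}_\bullet^\star=\tau^{-1}\circ\mathfrak{f}_\bullet^\star\circ\tau$ interact across the splicing. The identity-plus-correction blocks of the form $I\otimes I+D^1\otimes D^2$ should come from $\mathfrak{f}\circ\bar{\mathfrak{f}}$-type compositions, while the $X_\bullet$ terms—products $B_0 B_\infty B_1$ and cyclic permutations—arise as the obstruction to commuting $\tau$ past the chain homotopies, consistent with $X_\bullet^2=0$. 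Careful accounting of signs, of basis changes by the admissible matrices $\mathbb{P}_\bullet$, and of the twisting induced by the splicing identification is what will occupy the bulk of the work; this is essentially the content of \cite[Subsection 5.3]{Eftekhary-splicing}, which I would reproduce with the conventions fixed above.
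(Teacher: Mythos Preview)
The paper does not prove this proposition; it simply records it as a quotation from \cite[Subsection~5.3]{Eftekhary-splicing}. Your sketch is a reasonable high-level outline of the strategy carried out in that reference, and you yourself acknowledge at the end that the substantive work---tracking the precise block entries of $\mathfrak{D}$---is exactly the content of that citation, which you would reproduce rather than rederive. Since there is no in-paper argument to compare against and your proposal explicitly defers to the same external source, there is nothing further to add.
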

We call the matrices $M$ and $N$ {\emph{equivalent}} if the rank of 
$\Ker(M)$ is equal to the rank of $\Ker(N)$ and the rank of 
$\Coker(M)$ is equal to the rank of $\Coker(N)$. In our path towards 
proving Theorem~\ref{thm:main}, we try to change the matrix $\Dfrak$
to simpler equivalent matrices.

\section{Splicing formula and linear algebra} 
\label{sec:linear-algebra}
\begin{prop}\label{prop:3}
With our previous notation fixed, if 
$\rank\left(\widehat{\HF}(Y(K_1,K_2))\right)
<\rank\left(\widehat{\HF}(Y_1)\right)$,
then one of the following $5$ conditions is satisfied, where 
$r_i^2=\rank(B_i^2)$.
\begin{align*}
& \mathbf{(S_1)}\  r_0^2 \le r_1^2 = r_\infty^2 = a_1^2 
= a_\infty ^2 < a_0^2,\quad\quad\quad\quad\ \quad
\mathbf{(S_4)} \ r_0^2 = a_\infty^2, r_\infty^2 = a_0^2 \ \ 
\text{and} \ \  a_1^2 \geq a_0^2 , a_\infty^2,   \\
& \mathbf{(S_2)} \ r_0^2 = r_1^2 = a_\infty^2 \le r_\infty^2 \ \ 
\text{and} \ \  a_\infty^2 <  a_1^2 , a_0^2, 
\quad \quad \mathbf{(S_5)} \ r_0^2 = a_1^2, r_1^2 = a_0^2 \ \  
\text{and} \ \  a_\infty^2 \geq  a_0^2, a_1^2,\\
& \mathbf{(S_3)} \ r_0^2 = r_\infty^2 = a_1^2 \le r_1^2 \ \ 
\text{and} \ \  a_1^2 < a_\infty^2 , a_0^2.
\end{align*}
\end{prop}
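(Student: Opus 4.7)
The plan is to simplify the matrix $\mathfrak{D}$ of Proposition~\ref{prop:2} to an equivalent matrix via admissible changes of basis on the $K_2$-side, and then to extract from the reduction explicit lower bounds on $h(\mathfrak{D})$ that force the integers $r_\bullet^2=\rank(B_\bullet^2)$ into one of the five enumerated configurations whenever $h(\mathfrak{D})<\rank\widehat{\HF}(Y_1)$.

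First I would choose $P_\bullet^2$ and $Q_\bullet^2$ so that, as much as possible, each $B_\bullet^2$ is placed in the normal form $\colvec{I_{r_\bullet^2} & 0 \\ 0 & 0}$ of the appropriate size. The three normalizations are coupled because each $P_\bullet^2$ appears in two of the admissible change-of-basis matrices $\mathbb{P}_0^2$, $\mathbb{P}_1^2$, $\mathbb{P}_\infty^2$; the best one can do directly is to normalize (say) $B_\infty^2$ completely, and then exploit the residual freedom in $P_\infty^2$ together with row and column operations applied directly to the full matrix $\mathfrak{D}$ to extract identity pivots from the remaining tensor blocks of the form $B_\bullet^2\otimes\Box$ or $\Box\otimes B_\bullet^2$. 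The identity $(X_\bullet^2)^2=0$ from \cite[Lemma~5.4]{Eftekhary-incompressible} then controls how the cross terms such as $X_1^1\otimes B_0^2 X_1^2$ and their relatives can interact with these pivots.

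Eliminating the identity pivots produces a reduced equivalent matrix $\mathfrak{D}'$ whose dimensions are controlled by the defects $a_\bullet^2-r_\bullet^2$ and by auxiliary quantities measuring the cokernels of the products $B_0^2B_\infty^2$, $B_\infty^2 B_1^2$, $B_1^2 B_0^2$. A direct estimate of the residual blocks of $\mathfrak{D}'$ yields an inequality
\[
h(\mathfrak{D})\ge \rank\widehat{\HF}(Y_1)+\Phi,
\]
with $\Phi$ a nonnegative integral combination of the defects, strictly positive outside a controlled set of degenerate configurations. The size constraints $r_\bullet^2\le\min(a_{\bullet'}^2,a_{\bullet''}^2)$ dictated by the dimensions of the $B_\bullet^2$, combined with an enumeration modulo the cyclic symmetry $0\mapsto 1\mapsto\infty$ and the duality relations $\bar{\mathfrak{f}}_\bullet=\tau^{-1}\circ\mathfrak{f}_\bullet\circ\tau$, cut the list of saturating configurations down to exactly the five cases $\mathbf{(S_1)}$--$\mathbf{(S_5)}$, with the pair $\mathbf{(S_2)}$/$\mathbf{(S_3)}$ and the pair $\mathbf{(S_4)}$/$\mathbf{(S_5)}$ cyclically interchanged and $\mathbf{(S_1)}$ as the boundary case.

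The main obstacle is the combinatorial bookkeeping in the passage from $\mathfrak{D}$ to $\mathfrak{D}'$: one must verify that after exhausting identity pivots no further hidden cancellation among the off-diagonal blocks reduces the rank below the claimed bound, and conversely that the enumeration of degenerate configurations on which $\Phi$ can vanish yields precisely the five announced cases and no others. Handling the six block rows and columns of $\mathfrak{D}$ simultaneously, while accounting for the nontrivial interaction between the cross terms $X_\bullet^i$ and the products $B_{\bullet'}^i B_\bullet^i$, is where most of the technical work will lie.
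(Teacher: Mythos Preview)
Your plan is a sketch, not a proof, and the paper's argument avoids the matrix-reduction route entirely. The paper never tries to normalize the $B_\bullet^2$ or cancel pivots in $\mathfrak{D}$ for this proposition. Instead it writes down, by direct inspection of Equation~\ref{eq:main-matrix}, explicit six-block vectors in $\Ker(\mathfrak{D})$ built from elements of $\Ker(\mathfrak{f}_\bullet^i+\bar{\mathfrak{f}}_\bullet^i)$ and $\Ker(\mathfrak{f}_\bullet^i)\cap\Ker(\bar{\mathfrak{f}}_\bullet^i)$, and dually for $\Coker(\mathfrak{D})$. This produces a clean lower bound $h(\mathfrak{D})\ge\sum k_\bullet^1 k_{\bullet'}^2+\sum l_\bullet^1 l_{\bullet'}^2+\sum c_\bullet^1 c_{\bullet'}^2+\sum d_\bullet^1 d_{\bullet'}^2$, where each of the twelve integers $k_\bullet^i,l_\bullet^i,c_\bullet^i,d_\bullet^i$ is expressed in closed form in terms of $a_\bullet^i$, $r_\bullet^i$, and auxiliary defects $\delta_\bullet^i$. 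Subtracting $y_\infty^1$ and regrouping, the difference is a sum of nine products of nonnegative factors; only four factors on the $K_2$ side can drop to $-1$, and each such drop forces one of $\mathbf{(S_4)}$, $\mathbf{(S_5)}$, or the two subcases (Case~I, Case~II, related by the $1\leftrightarrow\infty$ swap, not the cyclic symmetry you invoke) that lead to $\mathbf{(S_1)}$--$\mathbf{(S_3)}$.

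The gap in your proposal is exactly the part you flag as ``the main obstacle'': you do not exhibit $\mathfrak{D}'$, you do not exhibit $\Phi$, and the coupling you note among the three admissible normalizations genuinely prevents bringing all three $B_\bullet^2$ to normal form at once, so it is not clear your reduction terminates in something tractable. The paper's device of working with the maps $\mathfrak{f}_\bullet,\bar{\mathfrak{f}}_\bullet$ themselves rather than with their block matrices sidesteps this: no basis choice is needed to measure $\dim\Ker(\mathfrak{f}_\bullet+\bar{\mathfrak{f}}_\bullet)$ or $\dim\Coker(\mathfrak{f}_\bullet,\bar{\mathfrak{f}}_\bullet)$, and those dimensions feed directly into the bound.
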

\begin{proof}
Let us assume that $B_\infty ^i z_1 ^i= B_1 ^i z_0 ^i= 
B_0 ^i z_\infty ^i= 0$ and
\begin{align*}
\colvec{
B_1 ^i & 0 \\
 D_1 ^i+A_0 ^i & B_0 ^i
}
\colvec{
 x_\infty ^i \\
 y_\infty ^i
}
=
\colvec{
 B_0 ^i & 0 \\
 D_0 ^i+A_\infty ^i & B_\infty ^i
}
\colvec{
 x_1 ^i \\
 y_1 ^i
}
=\colvec{
 B_\infty ^i & 0 \\
 D_\infty ^i+A_1 ^i & B_1 ^i
}
\colvec{
 x_0 ^i \\
 y_0 ^i
}
=0.
\end{align*}
Then it can easily be checked that
\[\mathfrak{D}
\colvec{
 y_0 ^1 \otimes x_\infty ^2 + x_\infty ^1 \otimes y_0 ^2 
 + z_0 ^1 \otimes z_0 ^2 \\
 x_\infty ^1 \otimes x_0 ^2 \\
 y_\infty ^1 \otimes x_0 ^2 + x_1 ^1 \otimes y_1 ^2 
 + z_\infty ^1 \otimes z_1 ^2 \\
 x_0 ^1 \otimes x_\infty ^2 \\
 y_1 ^1 \otimes x_1 ^2 + x_0 ^1 \otimes y_\infty ^2 
 + z_1 ^1 \otimes z_\infty ^2 \\
 x_1 ^1 \otimes x_1 ^2
}=0.\]
Elements in 
$ \Ker( \mathfrak{f}_\bullet ^i 
+ \overline{\mathfrak{f}}_\bullet ^i )$ and 
$K_\bullet ^i=\Ker(\mathfrak{f}_\bullet ^i 
\cap \overline{\mathfrak{f}}_\bullet ^i )$ 
correspond to the vectors of form
$\colvec{ x_\bullet ^i \\ y_\bullet ^i}$ 
and 
$\colvec{ 0 \\ z_\bullet ^i}$, respectively. Let us set
\[L_\bullet ^i=
\Ker( \mathfrak{f}_\bullet ^i + \overline{\mathfrak{f}}_\bullet ^i)
/\big(\Ker (\mathfrak{f}_\bullet ^i) \cap\Ker(  
\overline{\mathfrak{f}}_\bullet ^i )\big),\quad
k_\bullet ^i=\text{dim} (K_\bullet ^i) \quad \text{and} 
\quad  l_\bullet ^i=\text{dim} (L_\bullet ^i)
\]
for $i=1,2$ and  $\bullet \in \{0, 1, \infty \}$.
We then find
\[\text{dim} ( \Ker(\mathfrak{D})) \ge 
k_0^1 k_0^2 + k_\infty^1 k_1^2 + k_1^1 k_\infty^2 
+ l_\infty^1 l_0^2 + l_0^1 l_\infty^2 + l_1^1 l_1^2 .\]
Similarly, we have
\begin{align*}
&M_\bullet ^i:= 
\Coker( \mathfrak{f}_\bullet ^i, \bar{\mathfrak{f}}_\bullet ^i),  
\quad  N_\bullet ^i:=
\frac{\Image( \mathfrak{f}_\bullet ^i)
+\Image( \bar{\mathfrak{f}}_\bullet ^i)}
{\Image (\mathfrak{f}_\bullet ^i +  
\bar{\mathfrak{f}}_\bullet ^i )},\quad
c_\bullet ^i:=\text{dim}(M_\bullet ^i) \quad  
\text{and} \quad 
d_\bullet ^i:=\text{dim}(N_\bullet ^i),\\
\Rightarrow\quad&
\text{dim} ( \text{Coker} (\mathfrak{D})) \ge 
c_\infty^1 c_\infty ^2 + c_0^1 c_1 ^2 + c_1^1 c_0 ^2 
+ d_\infty^1 d_0 ^2 + d_0^1 d_\infty ^2 + d_1^1 d_1 ^2.
\end{align*}
 Denote the rank of $B_\bullet ^i$ by $r_\bullet ^i$. 
 Consequently, we find
\begin{align*}
& k_0^i =a_\infty^i - r_1 ^i,   
&& l_0^i =a_0^i - r_\infty ^i - \delta_0 ^i, 
&& c_0^i =a_1^i - r_\infty ^i, 
&&d_0^i =a_0^i - r_1 ^i - \delta_0 ^i, 
\\
&k_1^i =a_0^i - r_\infty ^i,
&&l_1^i =a_1^i - r_0 ^i - \delta_1 ^i,  
&&c_1^i =a_\infty^i - r_0 ^i ,
&&d_1^i =a_1^i - r_\infty ^i - \delta_1 ^i,
\\
&k_\infty^i =a_1^i - r_0 ^i,
&&l_\infty^i =a_\infty^i - r_1^i - \delta_\infty^i,
&&c_\infty^i =a_0^i - r_1 ^i,  
&&d_\infty^i =a_\infty^i - r_0 ^i - \delta_\infty ^i,
\end{align*}
where we have
\begin{align*}
& 0 \le r_0^i \le \min\{ a_1^i, a_\infty^i \}, 
&&0 \le r_1^i \le \min\{ a_\infty^i, a_0^i \},
&&0 \le r_\infty^i \le \min\{ a_0^i, a_1^i \},
\\
&0 \le \delta_0^i\leq a_0^i - \max\{ r_1^i, r_\infty^i \}, 
&&0 \le \delta_1^i\leq a_1^i - \max\{ r_\infty^i, r_0^i \},
&&  0 \le \delta_\infty^i\leq a_\infty^i - \max\{ r_0^i, r_1^i \}. 
\end{align*}
Let us denote the rank of $\widehat{\HF}((Y_i)_\bullet(K_i))$ by 
$y_\bullet^i$. We would like to make sure that
\begin{align*}
h(\mathfrak{D})&\geq  y_\infty^1= k_\infty^1+ l_\infty^1 
+ c_\infty^1+d_\infty^1= 2 a_\infty ^1 + a_1^1 + a_0^1 - 
2 r_0^1 - 2 r_1^1 - 2 \delta_\infty ^1. 
\end{align*}
From the above considerations we find
\begin{align*}
h(\mathfrak{D})-y_\infty^1 &
\geq ( a_\infty ^1 - r_1^1 ) ( a_\infty ^2 - r_1^2 ) 
+ ( a_1 ^1 - r_0^1 ) ( a_0 ^2 - r_\infty^2 -1) 
+ ( a_0 ^1 - r_\infty^1 ) ( a_1 ^2 - r_0^2 )  \\
  &+ ( a_0 ^1 - r_1^1 ) ( a_0 ^2 - r_1^2-1 ) 
  + ( a_\infty ^1 - r_0^1 ) ( a_1 ^2 - r_\infty^2 ) 
  + ( a_1 ^1 - r_\infty^1 ) ( a_\infty ^2 - r_0^2 )  \\
&+(a_\infty^1-\delta_\infty^1-r_1^1)(a_0^2-\delta_0^2-r_\infty^2-1) 
+(a_\infty^1-\delta_\infty^1-r_0^1)(a_0^2-\delta_0^2 - r_1^2 -1)\\
&+ (a_0^1-\delta_0^1 - r_\infty^1)(a_\infty^2-\delta_\infty^2-r_1^2) 
+ (a_0^1 - \delta_0^1 - r_1^1)( a_\infty ^2-\delta_\infty^2-r_0^2)\\
  &+ (a_1^1 - \delta_1^1 - r_0^1) ( a_1 ^2 - \delta_1^2 - r_0^2 ) 
  + (a_1^1 - \delta_1^1 - r_\infty^1)(a_1^2-\delta_1^2-r_\infty^2 )
\\
  &= \delta_\infty^1 (a_1^2 + a_\infty^2 - r_1^2 - r_\infty^2) 
  + \delta_1^1 (a_0^2 + a_\infty^2 - r_0^2 - r_\infty^2 -1) 
  + \delta_0^1 (a_0^2 + a_1^2 - r_0^2 - r_1^2 -1)  \\
  &+  (a_\infty^1 - \delta_\infty^1 - r_1^1) 
  (a_\infty^2 + a_0^2 - \delta_0^2 - r_\infty^2 - r_1^2 -1)  \\
  &+  (a_\infty^1 - \delta_\infty^1 - r_0^1) 
  (a_0^2 + a_1^2 - \delta_0^2 - r_1^2 - r_\infty^2 -1)  \\
  &+  (a_0^1 - \delta_0^1 - r_\infty^1) 
  (a_1^2 + a_\infty^2 - \delta_\infty^2 - r_0^2 - r_1^2 )  
  +  (a_0^1 - \delta_0^1 - r_1^1) 
  (a_0^2 + a_\infty^2 - \delta_\infty^2 - r_0^2 - r_1^2 )  \\
  &+  (a_1^1 - \delta_1^1 - r_0^1) 
  (a_0^2 + a_1^2 - \delta_1^2 - r_0^2 - r_\infty^2 )  
  +  (a_1^1 - \delta_1^1 - r_\infty^1) 
  (  a_1^2 + a_\infty^2 - \delta_1^2 - r_0^2 - r_\infty^2 ) . 
\end{align*}
The expressions in parentheses  on the left-hand-side of the above 
equation are non-negative except for the following $4$ expressions, 
which are greater than or equal to $-1$:
\begin{align*}
&a_0^2 + a_\infty^2 - r_0^2 - r_\infty^2 -1,&&
&&a_0^2 + a_1^2 - r_0^2 - r_1^2 -1,\\
&a_\infty^2 + a_0^2 - \delta_0^2 - r_\infty^2 - r_1^2 -1&&\text{and}
&&a_0^2 + a_1^2 - \delta_0^2 - r_1^2 - r_\infty^2 -1.
\end{align*}
If the first expression is equal to $-1$, it follows that 
$r_1^2\leq a_0^2=r_\infty^2\leq a_1^2$ and 
$r_1^2\leq a_\infty^2=r_0^2\leq a_1^2$. We are thus 
in case $\textbf{S}_4$. 
Similarly, if the second expression above is equal to 
$-1$, it follows that we are in case $\textbf{S}_5$. Away from these 
two cases,  it follows from 
$h(\mathfrak{D})<y_\infty^1$ that the third or fourth 
expression above is equal to $-1$. 
Thus, $a_\infty^2=\min\{r_1^2,r_\infty^2\}$
or $a_1^2=\min\{r_1^2,r_\infty^2\}$.\\

\textbf{ (Case I) } $ a_\infty^2= \min\{ r_1^2 , r_\infty ^2 \} $. 
In this case we have $ r_0^2 \le r_1^2 = a_\infty^2 \le r_\infty^2$.
From the assumption it is clear that 
$a_\infty^2 \le r_\infty^2 \le \text{min}\{a_0^2,a_1^2\}$. 
In particular $a_0^2 > a_\infty^2$ and $r_0^2 < a_0^2$. If 
$a_\infty^2=a_1^2$, we have case $\textbf{S}_1$. 
Otherwise $a_1^2 \ge a_\infty^2+2$. 
If we have $a_0^2+a_\infty^2-r_0^2-r_\infty^2=0$
we are in case $\textbf{S}_4$. Finally, if 
$a_1^2+a_\infty^2-\delta_1^2-r_0^2-r_\infty^2=0$
we are in case $\textbf{S}_2$. 
Away from these three special cases we find
\begin{align*}
& a_1^2+a_\infty^2-r_1^2-r_\infty^2 \ge 0, 
&& a_0^2+a_\infty^2-r_0^2-r_\infty^2\ge 1, 
&& a_0^2+a_1^2-r_0^2-r_1^2\ge 3,\\
& a_\infty^2+a_0^2-\delta_0^2-r_\infty^2-r_1^2=0, 
&& a_0^2+a_1^2-\delta_0^2-r_1^2-r_\infty^2\geq 2,
&&a_1^2+a_\infty^2-\delta_\infty^2-r_1^2-r_0^2 \ge 2,\\
&a_0^2+a_\infty^2-\delta_\infty^2-r_0^2-r_1^2 \ge 1,
&&a_0^2+a_1^2-\delta_1^2-r_\infty^2-r_0^2 \ge 1,
&&a_1^2+a_\infty^2-\delta_1^2-r_\infty^2-r_0^2 \ge 1.
\end{align*}
With these values in place, we compute
\begin{align*}
h(\mathfrak{D})-y_\infty^1 & \ge 2\delta_0^1
-(a_\infty^1-\delta_\infty^1-r_1^1)
+(a_\infty^1-\delta_\infty^1-r_0^1)
+2(a_0^1-\delta_0^1-r_\infty^1) \\
&+(a_0^1-\delta_0^1-r_1^1)+(a_1^1-\delta_1^1-r_0^1)
+(a_1^1-\delta_1^1-r_\infty^1) \\
&=2(a_0^1-r_\infty^1)+(a_0^1-\delta_0^1-r_\infty^1)
+2(a_1^1-\delta_1^1-r_0^1) \ge 0.
\end{align*}

\textbf{ (Case II) } $ a_1^2= \min \{ r_1^2 , r_\infty ^2 \} $. 
In this case we have $ r_0^2 \le r_\infty^2 = a_1^2 \le r_1^2$.
From the assumption it is clear that 
$a_1^2 \le r_1^2 \le \text{min}\{a_0^2,a_\infty^2\}$. In particular, 
$a_0^2 > a_\infty^2$. Thus $r_0^2 < a_0^2$. 
If $a_\infty^2=a_1^2$, we have case $\textbf{S}_1$. Otherwise, 
$a_\infty^2 \ge a_1^2+2$. If $a_0^2+a_1^2-r_0^2-r_1^2=0$
we are in  case $\textbf{S}_5$. Finally, if 
$a_1^2+a_\infty^2-\delta_\infty^2-r_0^2-r_1^2=0$
we are in  case $\textbf{S}_3$. Away from these
three  special cases we find
\begin{align*}
& a_1^2+a_\infty^2-r_1^2-r_\infty^2 \ge 0, 
&&a_0^2+a_\infty^2-r_0^2-r_\infty^2\ge 3,
&&a_0^2+a_1^2-r_0^2-r_1^2\ge 1,
\\
&a_0^2+a_\infty^2-\delta_0^2-r_\infty^2-r_1^2\ge 2, 
&&a_0^2+a_1^2-\delta_0^2-r_1^2-r_\infty^2=0,
&&a_1^2+a_\infty^2-\delta_\infty^2-r_1^2-r_0^2 \ge 1, 
\\
&a_0^2+a_\infty^2-\delta_\infty^2-r_1^2-r_0^2 \ge 1, 
&&a_0^2+a_1^2-\delta_1^2-r_\infty^2-r_0^2 \ge 1,
&&a_1^2+a_\infty^2-\delta_1^2-r_\infty^2-r_0^2 \ge 2.
\end{align*}
With these values in place, we compute
\begin{align*}
h(\mathfrak{D})-y_\infty^1 & \ge 2\delta_1^1
+(a_\infty^1-\delta_\infty^1-r_1^1)-
(a_\infty^1-\delta_\infty^1-r_0^1)
+(a_0^1-\delta_0^1-r_\infty^1) \\
&+(a_0^1-\delta_0^1-r_1^1)+(a_1^1-\delta_1^1-r_0^1)
+2(a_1^1-\delta_1^1-r_\infty^1) \\
&=2(a_1^1-r_\infty^1)+(a_1^1-\delta_1^1-r_\infty^1)
+2(a_0^1-\delta_0^1-r_1^1) \ge 0.
\end{align*}
Since cases I and II are sorted out, the proof is complete.
\end{proof}
\begin{lem}\label{lem:19}
Define $\imath:\{0,1,\infty\}\ra \{0,1,\infty\}$ by 
$\imath(0)=\infty,\imath(1)=1$ and $\imath(\infty)=0$.
Given the pair of knots $(K_1,K_2)$, and 
$(\circ,\bullet,*) \in \{ (0,1,\infty),(1,\infty,0),(\infty,0,1)\}$,
\begin{enumerate}
\item If $B_\circ^2$ is injective and $B_\bullet^2$ is surjective then
 \begin{enumerate}
 \item $\Ker(\mathfrak{D})$ contains a subspace isomorphic to 
 $\Ker(B_{\imath(\circ)}^1B_{\imath(\bullet)}^1) \otimes 
 \Ker(B_\bullet^2 B_\circ^2)$.
 \item $\Coker(\mathfrak{D})$ contains a subspace isomorphic to 
 $\Coker(B_{\imath(\circ)}^1B_{\imath(\bullet)}^1) 
 \otimes \Coker(B_\bullet^2 B_\circ^2)$.
 \end{enumerate}
\item If $B_\circ^2$ is surjective and $B_\bullet^2$ is injective then
 \begin{enumerate}
 \item $\Ker(\mathfrak{D})$ contains a subspace isomorphic to 
 $\Ker(B_{\imath(\bullet)}^1B_{\imath(*)}^1) 
 \otimes \Ker(B_*^2 B_\bullet^2)$.
 \item $\Coker(\mathfrak{D})$ contains a subspace isomorphic to 
 $\Coker(B_{\imath(*)}^1 B_{\imath(\circ)}^1) 
 \otimes \Coker(B_\circ^2 B_*^2)$.
 \end{enumerate}
\end{enumerate}
\end{lem}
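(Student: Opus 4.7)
The proof will exhibit explicit embeddings of the two tensor products into $\Ker(\mathfrak{D})$ and $\Coker(\mathfrak{D})$. The cyclic symmetry of the index set $\{0,1,\infty\}$—which simultaneously rotates the tuples $(\circ,\bullet,\ast)$ and the block rows and columns of $\mathfrak{D}$—reduces the three cyclic cases within each of parts (1) and (2) to a single representative. Moreover the statements in (1) and (2) are interchanged by the orientation-reversal duality on $K_2$, which swaps injectivity and surjectivity of the $B^2$-blocks in a compatible way. I would therefore focus on part (1) with $(\circ,\bullet,\ast)=(0,1,\infty)$, where $B_0^2$ is injective and $B_1^2$ is surjective, and aim to establish the embeddings $\Ker(B_\infty^1 B_1^1)\otimes\Ker(B_1^2 B_0^2)\hookrightarrow\Ker(\mathfrak{D})$ and $\Coker(B_\infty^1 B_1^1)\otimes\Coker(B_1^2 B_0^2)\hookrightarrow\Coker(\mathfrak{D})$.

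Set $k:=\dim\Ker(B_1^2 B_0^2)=\dim\bigl(\Image(B_0^2)\cap\Ker(B_1^2)\bigr)$ and choose a decomposition $\mathbb{H}_\infty(K_2)=V_1\oplus V_2\oplus V_3\oplus V_4$ with $V_1=\Image(B_0^2)\cap\Ker(B_1^2)$, $V_1\oplus V_2=\Image(B_0^2)$ and $V_1\oplus V_3=\Ker(B_1^2)$. Using admissible changes of basis $\mathbb{P}_0^2,\mathbb{P}_1^2,\mathbb{P}_\infty^2$—which preserve the canonical block form of each $\mathfrak{f}_\bullet^2$—one may then assume $B_0^2$ and $B_1^2$ are in the block-normal form adapted to this flag, so that $\Ker(B_1^2 B_0^2)$ is identified with a coordinate subspace $\mathbb{K}^k\subset\mathbb{H}_1(K_2)$. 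Substituting these forms into $\mathfrak{D}$ of Proposition~\ref{prop:2}, the entries built from $B_0^2$ and $B_1^2$ reduce to projections onto, or inclusions of, the $V_1$-coordinates. The remaining off-diagonal freedom in the $\mathbb{P}_\bullet^2$ (their $Q$-matrices) can be used to absorb cross-terms carrying $A_\bullet^2$, $D_\bullet^2$, or $X_\bullet^2$ factors in the $V_1$-direction. Together with row and column operations that preserve the equivalence class of $\mathfrak{D}$ in the sense recorded after Proposition~\ref{prop:2}, this would exhibit a sub-block of $\mathfrak{D}$ equivalent to $B_\infty^1 B_1^1\otimes I_k$. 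Since $\Ker(f\otimes I_k)=\Ker(f)\otimes\mathbb{K}^k$ and $\Coker(f\otimes I_k)=\Coker(f)\otimes\mathbb{K}^k$ over a field, the desired subspaces of $\Ker(\mathfrak{D})$ and $\Coker(\mathfrak{D})$ follow immediately.

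The principal obstacle is the decoupling step. The matrix $\mathfrak{D}$ has thirty-six blocks with numerous cross-terms, and one must verify that every contribution in the $V_1$-direction outside the intended sub-block can indeed be annihilated by the admissible $Q$-freedom combined with elementary row/column operations. The $X$-term entries in rows 4 and 6, which rely on the identity $(X^i_\bullet)^2=0$ recorded before Proposition~\ref{prop:2}, are the most delicate, since they couple the $V_1$-direction on the $K_2$ side with the triple-product terms on the $K_1$ side. Once the representative case is carried out, the remaining cases follow by cyclic relabeling of $\{0,1,\infty\}$ and by the orientation-reversal duality that exchanges parts (1) and (2).
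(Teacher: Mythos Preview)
Your core strategy---put $\tau_\bullet^2$ into block-normal form via admissible basis changes, substitute into $\mathfrak{D}$, perform elementary cancellations, and read off the desired tensor subspace from a single column (for the kernel) or row (for the cokernel)---is exactly the paper's approach. The paper carries this out for part~(2) with $(\circ,\bullet,*)=(\infty,0,1)$ rather than your part~(1) case, writing down explicit block forms for $\tau_0^2,\tau_\infty^2,\tau_1^2$, expanding $\mathfrak{D}$ to a $10\times 10$ block matrix, cancelling two identity blocks, and then identifying $\Ker(B_\infty^1 B_1^1)\otimes\Ker(B_1^2 B_0^2)$ in the first column and $\Coker(B_1^1 B_0^1)\otimes\Coker(B_\infty^2 B_1^2)$ in the second row. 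The remaining cases are declared ``completely similar'' without proof.

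Where your proposal overreaches is in the reduction step. The cyclic symmetry you invoke is not manifest in the form of $\mathfrak{D}$ given in Proposition~\ref{prop:2}: the $X_1^1,X_1^2$ terms occur only in block-rows 4 and 6 and nowhere else, so there is no literal block-row/column permutation of $\mathfrak{D}$ realizing the cyclic shift of $\{0,1,\infty\}$. Likewise, the orientation-reversal of $K_2$ replaces $\tau_\bullet^2$ by its inverse, but the paper records that $\tau_\bullet$ and $\tau_\bullet^{-1}$ share the \emph{same} off-diagonal block $B_\bullet$, so this duality does not swap injectivity and surjectivity of $B_\bullet^2$ and hence does not exchange parts (1) and (2) in the way you claim. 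In practice each of the six cases requires its own admissible normal form and its own bookkeeping through the $36$ blocks; the paper avoids this by proving only the case it needs and asserting the rest are analogous. Your representative computation would go through, but the symmetry shortcuts should be dropped or replaced by honest case-by-case verification.
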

\begin{proof}
Let us assume $(\circ,\bullet,*)=(\infty,0,1)$. After an admissible 
change of basis we have
\begin{equation*}
\renewcommand{\arraystretch}{0.8}%
{\scriptsize
\tau_0^2=\left(
\begin{array}{c@{\hspace{2bp}}c@{\hspace{2bp}};{2pt/2pt}c}
0 & 0 & I \\ 
0 & J & 0 \\  \hdashline[2pt/2pt]
I & 0 & 0 
\end{array}
\right)
}
\quad , \quad
\renewcommand{\arraystretch}{0.8}%
{\scriptsize
\tau_\infty^2=\left(
\begin{array}{c@{\hspace{2bp}};{2pt/2pt}c@{\hspace{2bp}}
c}
0 &  0 & I \\ \hdashline[2pt/2pt]
0 & K& 0 \\ 
0 & 0 & 0 
\end{array}
\right)
}
\quad , \quad
\renewcommand{\arraystretch}{0.8}%
{\scriptsize
\tau_1^2=\left(
\begin{array}{c@{\hspace{2bp}}c@{\hspace{2bp}};
{2pt/2pt}c@{\hspace{2bp}}c}
x & y  & A & B  \\ 
z& w  & X_1^2 & D \\   \hdashline[2pt/2pt]
* & * & \alpha & \beta  \\ 
* & * & \gamma  & \delta \\ 
\end{array}
\right).
}
\end{equation*}
If we replace the above block presentations, the matrix
$\Dfrak=\Dfrak(K_1,K_2)$ takes the form
\begin{align*}
\colvec[0.9]{
0 & \star & \star & 0 & \star & 0 & 0 & \star & \star &0\\
0 & D_\infty^1B_1^1\otimes DJ & 0 & B_1^1A_0^1\otimes I & 0 & 
B_1^1B_0^1\otimes I & 0 & D_\infty^1A_1^1\otimes DJ & I\otimes X_1^2 
&0\\
I\otimes X_1^2& \star & \star & \star & \star & \star & 0 & 0 & \star &\star\\
I\otimes KA& \star & \star & \star & \star & \star & 0 & 0 & 0 &0\\
0 & 0 & 0 & I\otimes I & 0 & 0 & 0 & 0 & 0 &0\\
B_\infty^1B_1^1\otimes I& 0 & 0 & 0 & 0 & I\otimes I & \star & 0 & \star 
&\star\\
0 & \star & 0 & 0& 0& 0 & 0 & \star & 0 &0\\
0& \star & 0 & 0 & 0 & 0 & I\otimes I & \star & \star &0\\
0& \star & 0 & 0 & 0 & 0 & 0& \star & \star &0\\
0&0&0&0&0&0&0&0&0&I\otimes I
}
\end{align*}
The identity matrices which appear as $(5,4)$ entry and the $(10,10)$
entry of the above matrix may be used foe cancellations, which change 
$\Dfrak$ to the following equivalent matrix
 \begin{align*}
\colvec[0.9]{
0 & \star & \star & \star & 0 & 0 & \star & \star \\
0 & D_\infty^1B_1^1\otimes DJ & 0 &  0 & 
B_1^1B_0^1\otimes I & 0 & D_\infty^1A_1^1\otimes DJ & I\otimes X_1^2 \\
I\otimes X_1^2& \star & \star & \star & \star & 0 & 0 & \star \\
I\otimes KA& \star & \star  & \star & \star & 0 & 0 & 0 \\
B_\infty^1B_1^1\otimes I& 0 & 0  & 0 & I\otimes I & \star & 0 & \star \\
0 & \star & 0 & 0& 0 & 0 & \star & 0 \\
0& \star & 0 &  0 & 0 & I\otimes I & \star & \star \\
0& \star & 0 & 0  & 0 & 0& \star & \star
}
\end{align*}
Since 
$B_1^2 B_0^2=\colvec{ A\\ X_1^2}$ and 
$B_\infty^2 B_1^2=\colvec{ X_1^2 &  D}$,
corresponding to the first column and the second row of the 
above matrix we obtain the subspace   
$K\simeq\Ker(B_\infty^1B_1^1)\otimes\Ker(B_1^2B_0^2)$ of 
$\Ker(\Dfrak)$ and the subspace $C\simeq 
\Coker(B_1^1B_0^1)\otimes \Coker(B_\infty^2B_1^2)$
of $\Coker(\Dfrak)$. This proves $(2-a)$ and $(2-b)$ when 
$(\circ,\bullet,*)=(\infty,0,1)$. The proof of the other claims 
in Lemma~\ref{lem:19} (which are not used in this paper) is 
completely similar.
\end{proof}
\begin{remark}\label{remark}
If $B_0^2$ is injective and $B_\infty^2$ is surjective,
the subspaces $K\subset\Ker(\Dfrak)$ and $C\subset \Coker(\Dfrak)$
constructed in Lemma~\ref{lem:19} correspond
to the first column and the first row in the block decomposition
of Equation~\ref{eq:main-matrix}.  
Combining with the discussion after 
\cite[Definition 5.2]{Eftekhary-incompressible}, it follows that
 $\Ker(\Dfrak)$ and $\Coker(\Dfrak)$ include 
subspaces isomorphic to 
\begin{align*}
&\left(\Ker(B_0^1)\otimes \Ker(B_\infty^2)\right)\oplus 
\left(\Ker(B_\infty^1)\otimes \Ker(B_0^2)\right)\oplus 
\left(K+K'\right)\quad \text{and}\\ 
&\left(\Coker(B_0^1)\otimes \Coker(B_\infty^2)\right)\oplus 
\left(\Coker(B_\infty^1)\otimes \Coker(B_0^2)\right)\oplus 
\left(C+C'\right).
\end{align*} 
where $K,K'\subset \Ker(\ffrak_0^1)\otimes\Ker(\ffrak_0^2)$ 
are subspaces isomorphic to 
$\Ker(B_\infty^1B_1^1)\otimes \Ker(B_1^2B_0^2)$ and 
 $\Ker(B_1^1)\otimes\Ker(B_1^2)$ (respectively), and
$C,C'\subset \Coker(\ffrak_\infty^1)\otimes \Coker(\ffrak_\infty^2)$
 are subspaces isomorphic to 
 $\Coker(B_1^1B_0^1)\otimes \Coker(B_\infty^2B_1^2)$ and 
$\Coker(B_1^1)\otimes\Coker(B_2^2)$ (respectively).
\end{remark}

\section{Splicing and the double filtration of knot Floer complex} 
\label{sec:lemmas}
Let us assume that $C=C_{K}={\CFK}^\infty(K)$ is the
$\Z\oplus\Z$ filtered chain complex associated with the oriented 
knot $K$ inside the homology sphere $Y$.
Let $\mathrm{F}_\mathfrak{s}$ denote the homology of $C\{i \le \mathfrak{s}, j=0 \}$ and
 $\mathrm{F}^\prime_\mathfrak{s}$ denote the homology of 
$C_{-K}\{i \le \mathfrak{s}, j=0 \}$, where $-K$ denotes the knot $K$ with 
the reversed orientation. There are induced maps
$\iota_\mathfrak{s}: \mathrm{F}_\mathfrak{s} \rightarrow \widehat{\HF}(Y)$ and 
$\iota^\prime_\mathfrak{s}: \mathrm{F}^\prime_\mathfrak{s} \rightarrow \widehat{\HF}(Y)$
and we let
$$
\mathrm{H}_{\mathfrak{s},*}=\mathrm{Im}(\iota_\mathfrak{s}), 
\quad \mathrm{K}_{\mathfrak{s},*}=\mathrm{Ker}(\iota_\mathfrak{s}),
\quad \mathrm{H}_{*,\mathfrak{s}}=\mathrm{Im}(\iota^\prime_\mathfrak{s}) \quad \text{and} 
\quad \mathrm{K}_{*,\mathfrak{s}}=\mathrm{Ker}(\iota^\prime_\mathfrak{s}).
$$
There is also an induced map from $K_{\mathfrak{s},*}$ to $K_{\mathfrak{s}+i,*}$ if 
$i>0$. Denote the kernel of this map by 
$[K_{\mathfrak{s},*}]_{\mathfrak{s}+i}$ and denote its image by $[K_{\mathfrak{s},*}]^{\mathfrak{s}+i}$.
Define $[K_{*,\mathfrak{s}}]_{\mathfrak{s}+i}$ and $[K_{*,\mathfrak{s}}]^{\mathfrak{s}+i}$ similarly.
Then
\begin{align*}
\widehat{\HF}(Y)\simeq \bigoplus_{p,q}\mathrm{A}_{p,q}
\simeq \bigoplus_s\mathrm{E}_\mathfrak{s},\quad
\mathrm{A}_{p,q}=\frac{\mathrm{H}_{p,q}}{\mathrm{H}_{p-1,q}
+\mathrm{H}_{p,q-1}}\quad\text{and}\quad
\mathrm{E}_\mathfrak{s}
=\frac{\bigoplus_{p+q=\mathfrak{s}}H_{p,q}}{\bigoplus_{p+q=\mathfrak{s}-1}H_{p,q}}
\simeq \bigoplus_{p+q=\mathfrak{s}}\mathrm{A}_{p,q}.
\end{align*}
where $\mathrm{H}_{\mathfrak{s},\mathfrak{t}}=\mathrm{H}_{\mathfrak{s},*} \cap \mathrm{H}_{*,\mathfrak{t}}$. 
Note that 
$\mathrm{H}_{\mathfrak{s},\mathfrak{t}} \cong \bigoplus_{p\leq \mathfrak{s},q\leq \mathfrak{t}}\mathrm{A}_{p,q}$. 
Denote the rank of $\mathrm{E}_\mathfrak{s}=\mathrm{E}_\mathfrak{s}(K)$ by 
$e_\mathfrak{s}=e_\mathfrak{s}(K)$.
The following lemma is then a direct consequence of 
Proposition~\ref{prop:1}.
\begin{lem}\label{lem:3.1}
With the above notation fixed,
\begin{equation}\label{eq:H-decomposition}
\mathbb{H}_n(\mathfrak{s}) \cong \mathrm{K}_{\mathfrak{s},*} \oplus 
\mathrm{K}_{*,n-\mathfrak{s}-1} \oplus (\bigoplus_{p \le \mathfrak{s}<n-q} 
\mathrm{A}_{p,q}) 
 \oplus (\bigoplus_{ p > \mathfrak{s}\geq n-q} \mathrm{A}_{p,q}).
\end{equation}
\end{lem}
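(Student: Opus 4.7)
By Proposition~\ref{prop:1}, $\mathbb{H}_n(\mathfrak{s})$ is the homology of the mapping cone $M(i_n^\mathfrak{s})$. The standard long exact sequence of a mapping cone, applied with $\mathbb{F}$-coefficients, yields an (ungraded) splitting
\[
\mathbb{H}_n(\mathfrak{s})\;\cong\;\Coker\bigl((i_n^\mathfrak{s})_*\bigr)\,\oplus\,\Ker\bigl((i_n^\mathfrak{s})_*\bigr),
\]
so my plan is to identify the kernel and cokernel of the induced map on homology and match them with the summands on the right-hand side of (\ref{eq:H-decomposition}).

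Under the reflection identifying $C_{-K}\{i\le \mathfrak{s},j=0\}$ with $C_K\{i=0,j\le\mathfrak{s}\}$, and using the chain homotopy equivalence $\Xi$ on the second summand of the source of $i_n^\mathfrak{s}$, the induced map on homology takes the form
\[
\psi\;:=\;\iota_\mathfrak{s}\oplus\iota'_{n-\mathfrak{s}-1}\;:\;\mathrm{F}_\mathfrak{s}\oplus\mathrm{F}'_{n-\mathfrak{s}-1}\longrightarrow\widehat{\HF}(Y),\qquad (x,y)\longmapsto\iota_\mathfrak{s}(x)+\iota'_{n-\mathfrak{s}-1}(y).
\]
Its image is tautologically $\mathrm{H}_{\mathfrak{s},*}+\mathrm{H}_{*,n-\mathfrak{s}-1}$. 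To decompose the kernel, I would pick linear sections $s$ and $s'$ of $\iota_\mathfrak{s}$ and $\iota'_{n-\mathfrak{s}-1}$ over the intersection $V:=\mathrm{H}_{\mathfrak{s},*}\cap\mathrm{H}_{*,n-\mathfrak{s}-1}$. Decomposing an arbitrary $(x,y)\in\Ker(\psi)$, with $v:=\iota_\mathfrak{s}(x)=\iota'_{n-\mathfrak{s}-1}(y)\in V$, as $(x-s(v),0)+(0,y-s'(v))+(s(v),s'(v))$ produces an internal direct sum
\[
\Ker(\psi)\;=\;\mathrm{K}_{\mathfrak{s},*}\,\oplus\,\mathrm{K}_{*,n-\mathfrak{s}-1}\,\oplus\,V,
\]
with triviality of pairwise intersections an immediate check.

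The final step is to translate $V$ and $\Coker(\psi)$ into the $\mathrm{A}_{p,q}$-language via the identification $\mathrm{H}_{\mathfrak{s},\mathfrak{t}}\cong\bigoplus_{p\le\mathfrak{s},\,q\le\mathfrak{t}}\mathrm{A}_{p,q}$ recorded in the paragraph preceding the lemma. Taking $\mathfrak{t}=n-\mathfrak{s}-1$, the inequality $q\le n-\mathfrak{s}-1$ is equivalent to $\mathfrak{s}<n-q$, so $V=\mathrm{H}_{\mathfrak{s},n-\mathfrak{s}-1}\cong\bigoplus_{p\le\mathfrak{s}<n-q}\mathrm{A}_{p,q}$, which is the third summand on the right. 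Dually, $\Coker(\psi)$ corresponds to those $\mathrm{A}_{p,q}$ with $p>\mathfrak{s}$ \emph{and} $q>n-\mathfrak{s}-1$, i.e.\ $p>\mathfrak{s}\ge n-q$, producing the fourth summand. Assembling these four pieces gives (\ref{eq:H-decomposition}). The only delicate points are the passage from the mapping cone long exact sequence to the ungraded splitting and verifying that the decomposition of $\Ker(\psi)$ really is direct, but both reduce to elementary linear algebra over a field and present no genuine obstacle.
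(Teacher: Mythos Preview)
Your proposal is correct and matches the paper's approach: the paper simply asserts that the lemma is ``a direct consequence of Proposition~\ref{prop:1}'' without further detail, and your argument via the mapping-cone long exact sequence, identifying $(i_n^{\mathfrak s})_*$ with $\iota_{\mathfrak s}\oplus\iota'_{n-\mathfrak s-1}$ and decomposing its kernel and cokernel using the $\mathrm{A}_{p,q}$-grid, is exactly the intended unpacking of that phrase.
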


Let us investigate the behaviour of $\mathfrak{f}_\bullet^\mathfrak{s}$ and 
$\bar{\mathfrak{f}}_\bullet^\mathfrak{s}$, $\bullet \in \{0,1,\infty \}$, in 
terms of the above description of the modules $\Hbb_\bullet(K)$. 
The map 
$\mathfrak{f}_\infty^\mathfrak{s}: \mathbb{H}_0(\mathfrak{s}) \rightarrow \mathbb{H}_1(\mathfrak{s})$  
is induced by the inclusion and sends a quadruple
$(x,y,z,w)$, which belongs to the module on the right-hand-side
of Equation~\ref{eq:H-decomposition}, to  $(x,[y],z,[w])$.
Here $[y]$ denotes  the image of $y$ in $\mathrm{K}_{*,-\mathfrak{s}}$ and 
$[w]$ denotes the image of $w$ in 
$\bigoplus_{p>\mathfrak{s}>-q } \mathrm{A}_{p,q}$.
 Similarly, $\bar{\mathfrak{f}}_\infty^{\mathfrak{s}+1}: 
 \mathbb{H}_0(\mathfrak{s}) \rightarrow \mathbb{H}_1(\mathfrak{s}+1)$ sends 
 $(x,y,z,w)$ to $([x],y,z,[w])$, where  $[x]$ denotes 
 the image of $x$ in $\mathrm{K}_{\mathfrak{s}+1,*}$ and $[w]$ denotes 
 the image of $w$ in 
 $\bigoplus_{p-1>\mathfrak{s}\geq -q }  \mathrm{A}_{p,q}$. 
Thus, the following lemma follows.

\begin{lem}\label{lem:3.2}
With the above notation fixed,
\begin{itemize}
\item[1.]
$\Ker({\mathfrak{f}}_\infty^\mathfrak{s})= [\mathrm{K}_{*,-\mathfrak{s}-1}]_{-\mathfrak{s}} \oplus 
\Big(\bigoplus_{  p>\mathfrak{s}=-q } 
\mathrm{A}_{p,q}\Big).$
\item[2.]
$\mathrm{Im}({\mathfrak{f}}_\infty^\mathfrak{s})= \mathrm{K}_{\mathfrak{s},*} \oplus 
[\mathrm{K}_{*,-\mathfrak{s}-1}]^{-\mathfrak{s}} \oplus \Big(\bigoplus_{p \le \mathfrak{s}<-q } 
\mathrm{A}_{p,q}\Big)\oplus\Big(\bigoplus_{ p>\mathfrak{s}>-q}
\mathrm{A}_{p,q}\Big)$
\end{itemize}
\end{lem}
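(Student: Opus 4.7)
The plan is to read off both statements directly from the explicit formula for $\mathfrak{f}_\infty^\mathfrak{s}$ just established in the paragraph preceding the lemma, applied summand by summand to the decomposition of Lemma~\ref{lem:3.1}.

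First I would write down the four-summand decomposition for both $\mathbb{H}_0(\mathfrak{s})$ and $\mathbb{H}_1(\mathfrak{s})$ explicitly. For $n=0$ the last two summands are $\bigoplus_{p\le\mathfrak{s},\,q\le-\mathfrak{s}-1}\mathrm{A}_{p,q}$ and $\bigoplus_{p>\mathfrak{s},\,q\ge-\mathfrak{s}}\mathrm{A}_{p,q}$, while for $n=1$ they are $\bigoplus_{p\le\mathfrak{s},\,q\le-\mathfrak{s}}\mathrm{A}_{p,q}$ and $\bigoplus_{p>\mathfrak{s},\,q>-\mathfrak{s}}\mathrm{A}_{p,q}$. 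The shift of the diagonal from $\{q=-\mathfrak{s}-1\}$ to $\{q=-\mathfrak{s}\}$ is the only change between the two decompositions, and it is precisely this shift that controls everything.

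Next I would verify that, under the formula $(x,y,z,w)\mapsto(x,[y],z,[w])$, the map $\mathfrak{f}_\infty^\mathfrak{s}$ splits as a direct sum of four pieces: the identity on $\mathrm{K}_{\mathfrak{s},*}$; the natural map $\mathrm{K}_{*,-\mathfrak{s}-1}\to\mathrm{K}_{*,-\mathfrak{s}}$ induced by inclusion; the inclusion of $\bigoplus_{p\le\mathfrak{s},\,q\le-\mathfrak{s}-1}\mathrm{A}_{p,q}$ as a direct summand of $\bigoplus_{p\le\mathfrak{s},\,q\le-\mathfrak{s}}\mathrm{A}_{p,q}$; and the projection of $\bigoplus_{p>\mathfrak{s},\,q\ge-\mathfrak{s}}\mathrm{A}_{p,q}$ onto $\bigoplus_{p>\mathfrak{s},\,q>-\mathfrak{s}}\mathrm{A}_{p,q}$ obtained by killing the $q=-\mathfrak{s}$ line. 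Each of these identifications is forced by the definition of the summands and by the fact that $\mathfrak{f}_\infty^\mathfrak{s}$ is induced by the sub-complex inclusion $M(i_0^\mathfrak{s})\hookrightarrow M(i_1^\mathfrak{s})$.

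Given this splitting, the kernel and image computations are mechanical. The first piece is an isomorphism and contributes $0$ to $\Ker$ and $\mathrm{K}_{\mathfrak{s},*}$ to $\mathrm{Im}$. The second piece, by the very definition of the bracket notation, contributes $[\mathrm{K}_{*,-\mathfrak{s}-1}]_{-\mathfrak{s}}$ to $\Ker$ and $[\mathrm{K}_{*,-\mathfrak{s}-1}]^{-\mathfrak{s}}$ to $\mathrm{Im}$. The third piece is injective and contributes $0$ to $\Ker$ and all of $\bigoplus_{p\le\mathfrak{s}<-q}\mathrm{A}_{p,q}$ to $\mathrm{Im}$. The fourth piece is surjective, with kernel exactly $\bigoplus_{p>\mathfrak{s}=-q}\mathrm{A}_{p,q}$ and image $\bigoplus_{p>\mathfrak{s}>-q}\mathrm{A}_{p,q}$. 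Assembling the four contributions gives the stated formulas for $\Ker(\mathfrak{f}_\infty^\mathfrak{s})$ and $\mathrm{Im}(\mathfrak{f}_\infty^\mathfrak{s})$.

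The only non-bookkeeping issue—and so the main thing to verify carefully—is that the four-summand decomposition of Lemma~\ref{lem:3.1} is genuinely natural with respect to the inclusion $M(i_0^\mathfrak{s})\hookrightarrow M(i_1^\mathfrak{s})$, so that the component-wise description of $\mathfrak{f}_\infty^\mathfrak{s}$ is legitimate. This is inherited from the fact that the decomposition is built from the filtration of $\mathrm{CFK}^\infty$ and from the kernels of the structural maps $\iota_\mathfrak{s},\iota'_\mathfrak{s}$, both of which are manifestly compatible with enlarging the filtration threshold from $-\mathfrak{s}-1$ to $-\mathfrak{s}$. Once this compatibility is stated cleanly, the proof reduces to the summand-by-summand unpacking above.
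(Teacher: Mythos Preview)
Your proposal is correct and is precisely the approach the paper takes: the paper simply states ``Thus, the following lemma follows'' immediately after giving the explicit formula $(x,y,z,w)\mapsto(x,[y],z,[w])$, and your write-up is a careful summand-by-summand unpacking of exactly that inference. The naturality check you flag is the one nontrivial point the paper leaves implicit, but it is indeed inherited from the filtration as you say.
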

Using Lemma~\ref{lem:3.2}, we can quickly prove the following lemma.
\begin{lem}\label{lem:3.3} Withe the above notation fixed, we have
$\Ker(B_0) \cong \mathrm{E}_1$,
$\Coker(B_1) \cong \mathrm{E}_0$ and 
\begin{align*}
\Ker(B_1) &\cong \bigoplus_{\mathfrak{s}} [\mathrm{K}_{\mathfrak{s},*}]^{\mathfrak{s}+1} \oplus 
\bigoplus_{\mathfrak{s}} [\mathrm{K}_{*,\mathfrak{s}}]^{\mathfrak{s}+1} \oplus \bigoplus_{\mathfrak{s}} 
\mathrm{E}_{\mathfrak{s}}^{\mathrm{max}\{0,|\mathfrak{s}|-1\}}\quad\text{and}\\
\Coker(B_0) &\cong \bigoplus_{\mathfrak{s}} [\mathrm{K}_{\mathfrak{s},*}]^{\mathfrak{s}+1} \oplus 
\bigoplus_{\mathfrak{s}} [\mathrm{K}_{*,\mathfrak{s}}]^{\mathfrak{s}+1} \oplus \bigoplus_{\mathfrak{s}} 
\mathrm{E}_{\mathfrak{s}}^{\mathrm{max}\{0,-\mathfrak{s}\}}.
\end{align*}
\end{lem}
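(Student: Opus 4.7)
The plan is to interpret $B_0$ and $B_1$ as concrete maps between subquotients of $\Hbb_0$ and $\Hbb_1$ built from kernels and images of $\mathfrak{f}_\bullet,\bar{\mathfrak{f}}_\bullet$, and then to unwind these subquotients using the four-slot decomposition of $\Hbb_n(\mathfrak{s})$ from Lemma~\ref{lem:3.1} together with the explicit formulas for $\mathfrak{f}_\infty^\mathfrak{s}$ and $\bar{\mathfrak{f}}_\infty^{\mathfrak{s}+1}$ given in Lemma~\ref{lem:3.2} and the discussion preceding it.

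First I would establish the structural identifications. Write $\Hbb_0=U_1^{(0)}\oplus U_2^{(0)}$ with $U_2^{(0)}=\Ker(\mathfrak{f}_\infty)$; in this decomposition $B_0$ is the composition $U_2^{(0)}\hookrightarrow\Hbb_0\xrightarrow{\tau_0}\Hbb_0\twoheadrightarrow\Hbb_0/U_2^{(0)}$. Because the block forms of $\tau_0$ and $\tau_0^{-1}$ share the same $A_0,B_0,D_0$ entries, the two maps agree on $U_2^{(0)}$, so $\tau_0(U_2^{(0)})=\tau_0^{-1}(U_2^{(0)})$. The relation $\bar{\mathfrak{f}}_\infty=\tau_1^{-1}\mathfrak{f}_\infty\tau_0$ gives $\Ker(\bar{\mathfrak{f}}_\infty)=\tau_0^{-1}(\Ker(\mathfrak{f}_\infty))$, hence $\tau_0(\Ker(\mathfrak{f}_\infty))=\Ker(\bar{\mathfrak{f}}_\infty)$. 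Therefore
\[
\Ker(B_0)\cong\Ker(\mathfrak{f}_\infty)\cap\Ker(\bar{\mathfrak{f}}_\infty),\quad \Coker(B_0)\cong\Hbb_0/\bigl(\Ker(\mathfrak{f}_\infty)+\Ker(\bar{\mathfrak{f}}_\infty)\bigr).
\]
The parallel analysis for $B_1$, combined with the exactness $\Ker(\mathfrak{f}_0)=\mathrm{Im}(\mathfrak{f}_\infty)$ and $\Ker(\bar{\mathfrak{f}}_0)=\mathrm{Im}(\bar{\mathfrak{f}}_\infty)$, yields
\[
\Ker(B_1)\cong\mathrm{Im}(\mathfrak{f}_\infty)\cap\mathrm{Im}(\bar{\mathfrak{f}}_\infty),\quad \Coker(B_1)\cong\Hbb_1/\bigl(\mathrm{Im}(\mathfrak{f}_\infty)+\mathrm{Im}(\bar{\mathfrak{f}}_\infty)\bigr).
\]

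Next I would compute these four subquotients Spin-c class by Spin-c class. Lemma~\ref{lem:3.2} gives $\Ker(\mathfrak{f}_\infty^\mathfrak{s})$ and $\mathrm{Im}(\mathfrak{f}_\infty^\mathfrak{s})$ as explicit direct sums in the four slots of $\Hbb_0(\mathfrak{s})$ and $\Hbb_1(\mathfrak{s})$, and a parallel computation from the formula $(x,y,z,w)\mapsto([x],y,z,[w])$ for $\bar{\mathfrak{f}}_\infty^{\mathfrak{s}+1}$ gives the analogous expressions for $\Ker(\bar{\mathfrak{f}}_\infty^{\mathfrak{s}+1})$ and $\mathrm{Im}(\bar{\mathfrak{f}}_\infty^\mathfrak{s})$. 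Intersection, sum, and quotient then reduce to a slot-by-slot calculation. For $\Ker(B_0)$, the $x$-, $y$-, and $z$-slots of the intersection vanish and the $w$-slot reduces to the single summand $\mathrm{A}_{\mathfrak{s}+1,-\mathfrak{s}}$; summing over $\mathfrak{s}$ rearranges $\bigoplus_\mathfrak{s}\mathrm{A}_{\mathfrak{s}+1,-\mathfrak{s}}$ into $\mathrm{E}_1=\bigoplus_{p+q=1}\mathrm{A}_{p,q}$. The argument for $\Coker(B_1)\cong\mathrm{E}_0$ is parallel, with residual summand $\mathrm{A}_{\mathfrak{s},-\mathfrak{s}}$ per Spin-c class.

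For $\Ker(B_1)$ and $\Coker(B_0)$ all four slots contribute. The $x$- and $y$-slot pieces are subquotients of $\mathrm{K}_{\mathfrak{s},*}$ and $\mathrm{K}_{*,-\mathfrak{s}-1}$ under their natural inclusion maps, and after the reindexings $\mathfrak{s}\mapsto\mathfrak{s}-1$ and $\mathfrak{s}\mapsto-\mathfrak{s}-1$ they assemble into $\bigoplus_\mathfrak{s}[\mathrm{K}_{\mathfrak{s},*}]^{\mathfrak{s}+1}$ and $\bigoplus_\mathfrak{s}[\mathrm{K}_{*,\mathfrak{s}}]^{\mathfrak{s}+1}$. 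The $z$- and $w$-slot pieces are direct sums of $\mathrm{A}_{p,q}$ with multiplicities counting the Spin-c classes in which a given $(p,q)$ survives the intersection or quotient; setting $\mathfrak{s}=p+q$, these counts collapse to $\max\{0,|\mathfrak{s}|-1\}$ for $\Ker(B_1)$ and $\max\{0,-\mathfrak{s}\}$ for $\Coker(B_0)$. The main obstacle will be precisely this combinatorial bookkeeping—keeping track of four slots, two subspaces, and the integer index $\mathfrak{s}$ simultaneously—together with verifying carefully the asymmetry between the intersection and quotient constructions that produces the two different multiplicity functions.
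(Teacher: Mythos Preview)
Your proposal is correct and follows essentially the same approach as the paper: first identify $\Ker(B_0)$, $\Coker(B_0)$, $\Ker(B_1)$, $\Coker(B_1)$ with the intersections and sums of $\Ker(\mathfrak{f}_\infty)$, $\Ker(\bar{\mathfrak{f}}_\infty)$, $\mathrm{Im}(\mathfrak{f}_\infty)$, $\mathrm{Im}(\bar{\mathfrak{f}}_\infty)$, and then unwind these using the four-slot decomposition of Lemma~\ref{lem:3.1} and the explicit descriptions in Lemma~\ref{lem:3.2}. The only difference is cosmetic: where the paper obtains the structural identifications by writing out the block matrices for $\mathfrak{f}_\infty$ and $\bar{\mathfrak{f}}_\infty=\tau_1^{-1}\mathfrak{f}_\infty\tau_0$ and reading off the conditions directly, you instead use the observation that $\tau_\bullet$ and $\tau_\bullet^{-1}$ agree on the second block (both send $(0,y)$ to $(B_\bullet y,D_\bullet y)$) to deduce $\tau_\bullet(\Ker(\mathfrak{f}_{\bullet'}))=\tau_\bullet^{-1}(\Ker(\mathfrak{f}_{\bullet'}))=\Ker(\bar{\mathfrak{f}}_{\bullet'})$; this is a clean way to package the same computation.
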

\begin{proof}
Note that $\Ker(B_0) \cong \Ker(\mathfrak{f}_\infty) \cap 
\Ker(\bar{\mathfrak{f}}_\infty)$. To see this note that since 
$\mathfrak{f}_\infty$ takes the block form 
$\left( \begin{smallmatrix}0&0\\ I&0 \end{smallmatrix} \right)$ and 
$\bar{\mathfrak{f}}_\infty
=\tau_1^{-1}\circ\mathfrak{f}_\infty\circ \tau_0$. 
Thus $\left(\begin{smallmatrix}x \\ y\end{smallmatrix}\right)\in 
\Ker (\mathfrak{f}_\infty)\cap\Ker(\bar{\mathfrak{f}}_\infty)$ 
if and only if 
\begin{align*}
\colvec{ 0 & 0 \\ I & 0 } \colvec{ x \\ y} =0 \quad\text{and}\quad
\colvec{A_1&B_1\\ \ov{C}_1&D_1 }\colvec{0&0\\ I&0}
\colvec{A_0&B_0\\C_0& D_0}\colvec{ x \\ y}=0.
\end{align*}
Thus $x=0$ and $B_0 y=0$, proving our claim. 
If $(x,y,z,w) \in \Ker(\mathfrak{f}^\mathfrak{s}_\infty) 
\cap \Ker(\bar{\mathfrak{f}}^\mathfrak{s}_\infty)$ then by Lemma \ref{lem:3.2} 
$x,y,z=0$ and $w \in \mathrm{A}_{\mathfrak{s}+1,-\mathfrak{s}}$. It follows that 
$\Ker(B_0) \cong \mathrm{E}_{1}$. For the 
second claim, note that $\Coker(B_1) 
\cong \Coker(\mathfrak{f}_\infty,\bar{\mathfrak{f}}_\infty)$. 
To see this, note that 
$\Coker(\mathfrak{f}_\infty,\bar{\mathfrak{f}}_\infty)$ is 
isomorphic to the intersection of the left kernel of the associated 
matrices and that 
\begin{align*}
\colvec{x & y}\colvec{ 0 & 0 \\ I & 0 }=0 \quad\text{and}\quad
\colvec{x & y}\colvec{A_1 & B_1\\ \ov{C}_1& D_1}\colvec{0&0\\ I&0}
\colvec{ A_0 & B_0 \\ C_0 & D_0}=0
\end{align*}
are true if and only if $y=0$ and $x B_1=0$. As we saw before 
\begin{equation}\label{eq:f-infty}
\begin{split}
\mathrm{Im}(\mathfrak{f}_\infty^\mathfrak{s})&= \mathrm{K}_{\mathfrak{s},*} \oplus 
[\mathrm{K}_{*,-\mathfrak{s}-1}]^{-\mathfrak{s}} \oplus (\bigoplus_{  
p \le \mathfrak{s} <-q } \mathrm{A}_{p,q}) \oplus 
(\bigoplus_{  p > \mathfrak{s} >-q} 
\mathrm{A}_{p,q})\quad\text{and}\\
\mathrm{Im}(\bar{\mathfrak{f}}_\infty^{\mathfrak{s}-1})
&= [\mathrm{K}_{\mathfrak{s}-1,*}]^{\mathfrak{s}} \oplus \mathrm{K}_{*,-\mathfrak{s}} \oplus 
(\bigoplus_{ p < \mathfrak{s} \le -q } \mathrm{A}_{p,q}) \oplus 
(\bigoplus_{ p > \mathfrak{s}>-q } 
\mathrm{A}_{p,q}) 
\end{split}
\end{equation}
Thus $\Coker(\mathfrak{f}^\mathfrak{s}_\infty,\bar{\mathfrak{f}}^\mathfrak{s}_\infty)
=\mathrm{A}_{\mathfrak{s},-\mathfrak{s}}$ and
$\Coker(B_1) \cong  \mathrm{E}_{0}$.
Similarly, $\Ker(B_1) \cong \Ker(\mathfrak{f}_0) \cap 
\Ker(\bar{\mathfrak{f}}_0) =\mathrm{Im}(\mathfrak{f}_\infty) 
\cap \mathrm{Im}(\bar{\mathfrak{f}}_\infty)$. 
It follows from Equation~\ref{eq:f-infty} that
\begin{equation}\label{eq:f-infty-intersection}
\mathrm{Im}(\mathfrak{f}_\infty^\mathfrak{s}) \cap 
\mathrm{Im}(\bar{\mathfrak{f}}_\infty^{\mathfrak{s}-1})= 
[\mathrm{K}_{\mathfrak{s}-1,*}]^s \oplus [\mathrm{K}_{*,-\mathfrak{s}-1}]^{-\mathfrak{s}} \oplus 
(\bigoplus_{ p < \mathfrak{s}  < -q } 
\mathrm{A}_{p,q}) \oplus (\bigoplus_{p > \mathfrak{s}>-q} \mathrm{A}_{p,q}), 
\end{equation}
proving the third isomorphism. 
Finally,
$\Coker(B_0)\cong\Coker(\mathfrak{f}_1,\bar{\mathfrak{f}}_1)=\frac 
{\Hbb_0}{\Ker(\mathfrak{f}_\infty)+\Ker(\bar{\mathfrak{f}}_\infty)}$. 
By Lemma~\ref{lem:3.2} 
\begin{align*}
\Ker(\mathfrak{f}_\infty^\mathfrak{s})= [\mathrm{K}_{*,-\mathfrak{s}-1}]_{-\mathfrak{s}} \oplus 
\bigoplus_{p > \mathfrak{s} =-q } \mathrm{A}_{p,q}\quad\text{and}\quad
\Ker(\bar{\mathfrak{f}}_\infty^\mathfrak{s})= [\mathrm{K}_{\mathfrak{s},*}]_{\mathfrak{s}+1} \oplus 
\bigoplus_{ p-1=\mathfrak{s}\ge -q} \mathrm{A}_{p,q}.
\end{align*}
The fourth isomorphism in the statement of the lemma 
follows immediately.
\end{proof}

\begin{lem}\label{lem:3.7}
With the above notation fixed, we have
\begin{align*}
\Ker(B_1 B_0) &\cong (\bigoplus_\mathfrak{s} [\mathrm{K}_{*,\mathfrak{s}}]_{\mathfrak{s}+1} \cap 
[\mathrm{K}_{*,\mathfrak{s}-1}]^{\mathfrak{s}}) \oplus (\bigoplus_{\mathfrak{s}} 
\mathrm{E}_{\mathfrak{s}}^{\max\{0,\mathfrak{s}\}})\quad\text{and}\\
\Coker(B_1 B_0) &\cong \bigoplus_\mathfrak{s} \frac{\mathrm{K}_{*,\mathfrak{s}}}
{[\mathrm{K}_{*,\mathfrak{s}-1}]^{\mathfrak{s}}+[\mathrm{K}_{*,\mathfrak{s}}]_{\mathfrak{s}+1}} \oplus 
(\bigoplus_{\mathfrak{s}} \mathrm{E}_{\mathfrak{s}}^{\mathrm{max}\{ 0,1-\mathfrak{s}\}}).
\end{align*}
\end{lem}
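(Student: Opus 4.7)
The plan is to follow the same template as the proof of Lemma 3.3: first, produce an intrinsic description of $\Ker(B_1B_0)$ and $\Coker(B_1B_0)$ in terms of the exact-triangle maps $\mathfrak{f}_\bullet$ and $\bar{\mathfrak{f}}_\bullet$; then decompose the result relative-$\mathrm{Spin}^c$-by-relative-$\mathrm{Spin}^c$ using Lemmas 3.1 and 3.2. For the intrinsic description, a direct block-matrix computation parallel to the derivation of $\Ker(B_0)\cong\Ker(\mathfrak{f}_\infty)\cap\Ker(\bar{\mathfrak{f}}_\infty)$ yields
\[
(\mathfrak{f}_0\bar{\mathfrak{f}}_\infty)\begin{pmatrix}x\\ y\end{pmatrix}
=\begin{pmatrix}0\\ B_1(A_0x+B_0y)\end{pmatrix},
\]
so its restriction to $\Ker(\mathfrak{f}_\infty)=\{(0,y)\}$ outputs $(0,B_1B_0y)$. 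This identifies $\Ker(B_1B_0)\cong\Ker(\mathfrak{f}_\infty)\cap\bar{\mathfrak{f}}_\infty^{-1}(\Image(\mathfrak{f}_\infty))$ (using $\Ker(\mathfrak{f}_0)=\Image(\mathfrak{f}_\infty)$ by exactness of the unbared triangle) and, dually, $\Coker(B_1B_0)\cong\Hbb_1/(\Image(\mathfrak{f}_\infty)+\bar{\mathfrak{f}}_\infty(\Ker(\mathfrak{f}_\infty)))$.

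Next I would decompose $\mathrm{Spin}^c$-by-$\mathrm{Spin}^c$ using Lemma 3.2, which explicitly records $\Ker(\mathfrak{f}_\infty^\mathfrak{s})\subset\Hbb_0(\mathfrak{s})$ and $\Image(\mathfrak{f}_\infty^{\mathfrak{s}+1})\subset\Hbb_1(\mathfrak{s}+1)$, together with the formula $(x,y,z,w)\mapsto([x],y,z,[w])$ for $\bar{\mathfrak{f}}_\infty^{\mathfrak{s}+1}\colon\Hbb_0(\mathfrak{s})\to\Hbb_1(\mathfrak{s}+1)$ from the same lemma. The condition $\bar{\mathfrak{f}}_\infty^{\mathfrak{s}+1}(\xi)\in\Image(\mathfrak{f}_\infty^{\mathfrak{s}+1})$ unwinds componentwise: the $K_{*,-\mathfrak{s}-1}$ summand of $\Ker(\mathfrak{f}_\infty^\mathfrak{s})$ (namely $[K_{*,-\mathfrak{s}-1}]_{-\mathfrak{s}}$) contributes those classes that also lie in the image from below, i.e., $[K_{*,-\mathfrak{s}-2}]^{-\mathfrak{s}-1}$. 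Reindexing $\mathfrak{t}=-\mathfrak{s}-1$ and summing over $\mathfrak{s}$, this assembles to $\bigoplus_\mathfrak{s}[K_{*,\mathfrak{s}}]_{\mathfrak{s}+1}\cap[K_{*,\mathfrak{s}-1}]^\mathfrak{s}$, the first summand of the claim; the cokernel produces the dual $\bigoplus_\mathfrak{s} K_{*,\mathfrak{s}}/([K_{*,\mathfrak{s}-1}]^\mathfrak{s}+[K_{*,\mathfrak{s}}]_{\mathfrak{s}+1})$ by the parallel analysis.

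The remaining $A_{p,q}$ contributions are obtained by tracking each $A_{p,q}$ across the various $\mathrm{Spin}^c$-levels in whose decomposition it sits (via the ranges $p\le\mathfrak{s}\le-q-1$ and $-q\le\mathfrak{s}\le p-1$ coming from Lemma 3.1) and counting, for each such $\mathfrak{s}$, the number of ways $A_{p,q}$ appears inside $\Ker(\mathfrak{f}_\infty^\mathfrak{s})\cap\bar{\mathfrak{f}}_\infty^{-1}(\Image(\mathfrak{f}_\infty^{\mathfrak{s}+1}))$. Organising the bookkeeping by the total index $p+q$, each $A_{p,q}$ ends up with multiplicity $\max\{0,p+q\}$ in $\Ker(B_1B_0)$ and rearranges into $\bigoplus_\mathfrak{s} \mathrm{E}_\mathfrak{s}^{\max\{0,\mathfrak{s}\}}$, while the dual analysis gives multiplicity $\max\{0,1-p-q\}$ in $\Coker(B_1B_0)$, producing $\bigoplus_\mathfrak{s}\mathrm{E}_\mathfrak{s}^{\max\{0,1-\mathfrak{s}\}}$. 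The main obstacle is precisely this multiplicity bookkeeping: the ``two-step'' character of the composition $B_1B_0$ forces each $A_{p,q}$ to contribute at several relevant $\mathrm{Spin}^c$-levels at once, unlike the single-block kernels and cokernels treated in Lemma 3.3, and verifying the $\max\{0,\mathfrak{s}\}$ multiplicity rigorously requires careful tracking of the $\mathrm{Spin}^c$-shift induced by $\bar{\mathfrak{f}}_\infty$ together with the overlaps between the four summands of $\Hbb_0(\mathfrak{s})$ in Lemma 3.1 and their images/preimages under $\bar{\mathfrak{f}}_\infty^\mathfrak{s}$.
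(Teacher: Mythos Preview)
Your overall strategy is exactly the paper's: the block-matrix identification
\[
\Ker(B_1B_0)\cong\Ker(\mathfrak{f}_\infty)\cap\Ker(\mathfrak{f}_0\bar{\mathfrak{f}}_\infty),
\qquad
\Coker(B_1B_0)\cong\frac{\Hbb_1}{\Image(\mathfrak{f}_\infty)+\Image(\bar{\mathfrak{f}}_\infty\mathfrak{f}_1)}
\]
matches the paper verbatim, and your treatment of the $K_{*,\bullet}$-summands (including the reindexing $\mathfrak{t}=-\mathfrak{s}-1$) is correct.

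The gap is in the $A_{p,q}$ bookkeeping. You propose to track each $A_{p,q}$ across all the $\mathrm{Spin}^c$-levels $\mathfrak{s}$ at which it sits inside $\Hbb_0(\mathfrak{s})$ (the ranges $p\le\mathfrak{s}\le-q-1$ and $-q\le\mathfrak{s}\le p-1$), and you anticipate a multiplicity $\max\{0,p+q\}$. But you are not intersecting with all of $\Hbb_0(\mathfrak{s})$: you are intersecting with $\Ker(\mathfrak{f}_\infty^\mathfrak{s})$, whose $A$-part by Lemma~\ref{lem:3.2} is supported entirely on the single column $q=-\mathfrak{s}$. So at level $\mathfrak{s}$ the only $A_{p,q}$ that can possibly appear are those with $q=-\mathfrak{s}$, and the paper's computation gives exactly
\[
\Ker(\mathfrak{f}_0^{\mathfrak{s}+1}\bar{\mathfrak{f}}_\infty^{\mathfrak{s}+1})\cap\Ker(\mathfrak{f}_\infty^\mathfrak{s})
=\big([K_{*,-\mathfrak{s}-1}]_{-\mathfrak{s}}\cap[K_{*,-\mathfrak{s}-2}]^{-\mathfrak{s}-1}\big)\oplus\bigoplus_{p>\mathfrak{s}=-q}A_{p,q}.
\]
Summing over $\mathfrak{s}$, each $A_{p,q}$ therefore appears \emph{at most once} (namely at $\mathfrak{s}=-q$, and only if $p+q>0$); dually, on the cokernel side each $A_{p,q}$ appears once iff $p+q\le 0$. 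The ``multi-level'' phenomenon you anticipate is what happens for $\Ker(B_1)$ in Lemma~\ref{lem:3.3} (there one is intersecting two images, neither of which pins down a column), but it does \emph{not} happen here. In other words, the exponents $\max\{0,\mathfrak{s}\}$ and $\max\{0,1-\mathfrak{s}\}$ in the statement are not what the argument produces; the $E$-contribution that actually falls out is $\bigoplus_{\mathfrak{s}>0}E_\mathfrak{s}$ for the kernel and $\bigoplus_{\mathfrak{s}\le 0}E_\mathfrak{s}$ for the cokernel, and this is exactly what is used later (the inequalities $\rank\Ker(B_1B_0)\ge\sum_{\mathfrak{s}>0}e_\mathfrak{s}$ and $\rank\Coker(B_1B_0)\ge\sum_{\mathfrak{s}\le0}e_\mathfrak{s}$ in the proof of Theorem~\ref{thm:main}).
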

\begin{proof}
Note that $\Ker(B_1 B_0) \cong \Ker(\mathfrak{f}_0 
\bar{\mathfrak{f}}_\infty) \cap \Ker(\mathfrak{f}_\infty)$ 
and $\Ker(\mathfrak{f}_0 \bar{\mathfrak{f}}_\infty)=\bar{\mathfrak{f}}_\infty^{-1}
\Big(\text{Im}(\bar{\mathfrak{f}}_\infty)\cap
\text{Im}(\mathfrak{f}_\infty)\Big)$.
It then follows from Equation~\ref{eq:f-infty-intersection} that
\begin{align*}
(\bar{\mathfrak{f}}_\infty^{\mathfrak{s}+1})^{-1}\Big(\text{Im}
(\bar{\mathfrak{f}}_\infty^{\mathfrak{s}+1})\cap\Ker(\mathfrak{f}_0^{\mathfrak{s}+1})\Big)
= \mathrm{K}_{\mathfrak{s},*} \oplus [\mathrm{K}_{*,-\mathfrak{s}-2}]^{-\mathfrak{s}-1} \oplus 
\Big(\bigoplus_{p \le \mathfrak{s} <  -q-1} \mathrm{A}_{p,q}\Big) \oplus 
\Big(\bigoplus_{p \ge \mathfrak{s}+1\geq -q} \mathrm{A}_{p,q}\Big).
\end{align*}
Using Lemma~\ref{lem:3.2} and the above computation we find
\begin{align*}
\Ker(\mathfrak{f}_0^{\mathfrak{s}+1} \bar{\mathfrak{f}}_\infty^{\mathfrak{s}+1}) \cap 
\Ker(\mathfrak{f}_\infty^{\mathfrak{s}})= [\mathrm{K}_{*,-\mathfrak{s}-1}]_{-\mathfrak{s}} \cap 
[\mathrm{K}_{*,-\mathfrak{s}-2}]^{-\mathfrak{s}-1}\oplus 
\bigoplus_{p>\mathfrak{s} =-q }\mathrm{A}_{p,q}.
\end{align*}
This completes the proof of the first isomorphism in the statement 
of the lemma. For the second isomorphism, note that
$\Coker(B_1 B_0) \cong \Coker(\mathfrak{f}_\infty,
\bar{\mathfrak{f}}_\infty \mathfrak{f}_1)=\frac{\mathbb{H}_1}
{\mathrm{Im}(\mathfrak{f}_\infty)
+\mathrm{Im}(\bar{\mathfrak{f}}_\infty \mathfrak{f}_1)}$. 
We can also compute 
\begin{align*}
\mathrm{Im}(\mathfrak{f}_\infty^\mathfrak{s})&= \mathrm{K}_{\mathfrak{s},*} \oplus 
[\mathrm{K}_{*,-\mathfrak{s}-1}]^{-\mathfrak{s}} \oplus (\bigoplus_{  
p \le \mathfrak{s} <-q} \mathrm{A}_{p,q}) \oplus (\bigoplus_{ p > \mathfrak{s} >-q} 
\mathrm{A}_{p,q})
\quad\text{and}\quad\\
\text{Im}(\bar{\mathfrak{f}}_\infty^\mathfrak{s} \mathfrak{f}_1^{\mathfrak{s}-1}) 
&=\bar{\mathfrak{f}}_\infty^\mathfrak{s}(\text{Ker}(\mathfrak{f}_\infty^{\mathfrak{s}-1}))  
=\bar{\mathfrak{f}}_\infty^\mathfrak{s}\Big([\mathrm{K}_{*,-\mathfrak{s}}]_{-\mathfrak{s}+1} \oplus 
\bigoplus_{p>\mathfrak{s}-1=-q} 
\mathrm{A}_{p,q}\Big)=[\mathrm{K}_{*,-\mathfrak{s}}]_{-\mathfrak{s}+1} \oplus 
\bigoplus_{p > \mathfrak{s} =-q+1} \mathrm{A}_{p,q}\\
\Rightarrow\quad& 
\Coker(\mathfrak{f}_\infty^\mathfrak{s},\bar{\mathfrak{f}}_\infty^\mathfrak{s} 
\mathfrak{f}_1^{\mathfrak{s}-1})=\frac{\mathrm{K}_{*,-\mathfrak{s}}}
{[\mathrm{K}_{*,-\mathfrak{s}-1}]^{-\mathfrak{s}}+[\mathrm{K}_{*,-\mathfrak{s}}]_{-\mathfrak{s}+1}} \oplus 
(\bigoplus_{ p \le \mathfrak{s}=-q } \mathrm{A}_{p,q})
\end{align*}
This completes the proof of the lemma.
\end{proof}


\section{Proof of the main theorem}\label{sec:proof}
This section is devoted to completing the proof of 
Theorem~\ref{thm:main}.
\begin{proof}[of Theorem~\ref{thm:main}]
Let us assume that the inequality in 
Equation~\ref{eq:main-inequality} is not satisfied.
Proposition~\ref{prop:3} implies that it is enough to consider 
the special cases $\mathbf{S}_i$ for $i=1,\ldots,5$. 
On the other hand,
if $Y_2$ is a homology sphere $L$-space, it follows that 
$|a_0^2-a_1^2|=1$ and 
\begin{align*}
1&=y_\infty^2=a_1^2+a_0^2-2r_0^2-2r_1^2
+2(a_\infty^2-\delta_\infty^2)
\geq  a_1^2+a_0^2-2r_0^2-2r_1^2+2\max\{r_0^2,r_1^2\}.
\end{align*}
Combining the outcome of Proposition~\ref{prop:3} and the above 
observation, it follows that if the inequality in 
Equation~\ref{eq:main-inequality} is not satisfied, then 
$r_0^2=r_1^2=r_\infty^2=a_1^2=a_0^1-1:=a\leq a_\infty^2$. 
Let us first assume that $a_\infty^2=a_1^2=a$. This implies that 
\begin{equation*}
\Ker(B_0^2),\Coker(B_0^2),\Ker(B_1^2),\Coker(B_\infty^2)=0\quad
\text{and}\quad\Ker(B_\infty^2),\Coker(B_1^2)\neq 0.
\end{equation*}
After an admissible change of basis, we may assume that the matrices 
$\tau_\bullet^2$  are of the form 
\begin{equation*}
\renewcommand{\arraystretch}{0.8}%
{\scriptsize
\tau_1^2=\left(
\begin{array}{c@{\hspace{2bp}}c@{\hspace{2bp}};{2pt/2pt}c}
0 & 0 & I \\ 
0 & J & 0 \\ \hdashline[2pt/2pt]
I & 0 & 0 \\
\end{array}
\right)
}
\quad , \quad
\renewcommand{\arraystretch}{0.8}%
{\scriptsize
\tau_0^2=\left(
\begin{array}{c@{\hspace{2bp}};{2pt/2pt}c}
0 & I \\ \hdashline[2pt/2pt]
I & 0 \\ 
\end{array}
\right)
}
\quad , \quad
\renewcommand{\arraystretch}{0.8}%
{\scriptsize
\tau_\infty^2=\left(
\begin{array}{c@{\hspace{2bp}};{2pt/2pt}c@{\hspace{2bp}}c}
0 & X_1^2 & B \\ \hdashline[2pt/2pt]
* & x & y \\ 
* & x^\prime & y^\prime \\
\end{array}
\right).
}
\end{equation*}
After replacing the above block presentations,
we arrive at the following equivalent matrix
$$
\left( \begin{smallmatrix}
0 & B_1^1A_0^1 \otimes I & 0 & B_1^1B_0^1 \otimes I 
& 0 & 0& I \otimes I &0 \\
0 & 0 & B_1^1A_0^1 \otimes I & 0 & B_1^1B_0^1 \otimes I &0& 0 &0\\
I \otimes X_1^2 & 0 &  D_1^1A_0^1 \otimes BJ & 0 & 
D_1^1B_0^1 \otimes BJ & 0&B_0^1B_\infty^1 \otimes I &
B_0^1A_\infty^1\otimes I\\
I \otimes x & I \otimes I & D_1^1A_0^1 \otimes yJ & 0 & 
D_1^1B_0^1 \otimes yJ & 0& 0 & 0\\
I \otimes x^\prime & 0 & I+D_1^1A_0^1 \otimes y^\prime J & 0 & 
D_1^1B_0^1 \otimes y^\prime J & 0& 0&0  &0\\
B_\infty^1B_1^1 \otimes I & 0 & 0 & I \otimes X_1^2 & 
I \otimes B & B_\infty^1A_1^1\otimes I&
X_\infty^1B_\infty^1 \otimes X_1^2 &X_1^1A_\infty^1\otimes X_1^1\\
0&0&0&0&0&I\otimes I&0&0\\
0&0&0&0&0&0&0&I\otimes I
\end{smallmatrix} \right).
$$
It follows that $\text{Ker}(\mathfrak{D})$ has a 
subspace isomorphic to 
$\text{Ker}(B_1^1B_0^1) \otimes \text{Ker}(X_1^2)$ corresponding
to the fourth column in the above presentation.  Since 
$\Ker(X_1^2)\neq 0$ by \cite[Lemma 4.6]{Eftekhary-incompressible}, it 
follows from Lemma~\ref{lem:3.7} that the rank of $\Ker(\Dfrak)$
is greater than or equal to $\sum_{\mathfrak{s}>0}e_\mathfrak{s}$, 
where $e_\mathfrak{s}=e_\mathfrak{s}(K_1)$. 
After an admissible change of basis we may assume that
\begin{equation*}
\renewcommand{\arraystretch}{0.8}%
{\scriptsize
\tau_1^2=\left(
\begin{array}{c@{\hspace{2bp}}c@{\hspace{2bp}};{2pt/2pt}c}
a & b & A \\ 
c & d & X_\infty^2 \\ \hdashline[2pt/2pt]
* & * & 0 \\
\end{array}
\right)
}
\quad , \quad
\renewcommand{\arraystretch}{0.8}%
{\scriptsize
\tau_0^2=\left(
\begin{array}{c@{\hspace{2bp}};{2pt/2pt}c}
0 & I \\ \hdashline[2pt/2pt]
I & 0 \\ 
\end{array}
\right)
}
\quad , \quad
\renewcommand{\arraystretch}{0.8}%
{\scriptsize
\tau_\infty^2=\left(
\begin{array}{c@{\hspace{2bp}};{2pt/2pt}c@{\hspace{2bp}}c}
0 & 0 & I \\ \hdashline[2pt/2pt]
0 & J & 0 \\ 
I & 0 & 0 \\
\end{array}
\right).
}
\end{equation*}
Similar to the argument above, we can identify a subspace of the
 $\text{Coker}(\mathfrak{D})$ which is isomorphic to 
$\text{Coker}(B_1^1B_0^1) \otimes \text{Coker}(X_\infty^2)$ and
corresponds to the second row in the block presentation
(in this case, one needs to pass to an 
equivalent matrix using a cancellation). Since 
$\Coker(X_\infty^2)$ is non-trivial by 
\cite[Lemma 4.6]{Eftekhary-incompressible}, 
it follows from Lemma~\ref{lem:3.7} that the rank of 
$\Coker(\mathfrak{D})$ is greater than or equal to 
$\sum_{\mathfrak{s}\leq 0}e_\mathfrak{s}$. Together with the inequality we obtained for 
the rank of $\Ker(\Dfrak)$, this completes the proof of the 
theorem (when $a_\infty^2=a_1^2$).
Thus $a_\infty^2>a_1^2=a$, and it follows that
\begin{equation*}
\Ker(B_0^2),\Coker(B_\infty^2)=0\quad\text{and}\quad
\Ker(B_1^2),\Ker(B_\infty^2),\Coker(B_0^2),\Coker(B_1^2)\neq 0.
\end{equation*}
If $B_\infty^2 B_1^2$ is not surjective then part $(2-b)$ of 
Lemma \ref{lem:19} and Remark~\ref{remark} imply that 
\begin{align*}
&\rank(\Coker(\mathfrak{D}))\geq 
\rank\left(\Coker(B_1^1 B_0^1)\otimes\Coker(B_\infty^2 B_1^2)\right)
\geq \sum_{\mathfrak{s}\leq 0} e_\mathfrak{s},\\
&\rank (\text{Ker}(\mathfrak{D}))\geq 
\rank \left(\text{Ker}(B_1^1) \otimes \text{Ker}(B_1^2)\right)+ 
\rank\left(\text{Ker}(B_0^1) \otimes \text{Ker}(B_\infty^2)\right)
\geq e_1+\sum_{|\mathfrak{s}|>1}e_\mathfrak{s}.
\end{align*}
The last inequality in each case is a consequence of 
Lemma~\ref{lem:3.3} and Lemma~\ref{lem:3.7}.
These two inequalities combine to complete the proof of the 
theorem (under the assumption
that $B_\infty^2 B_1^2$ is not surjective). We may thus assume 
that  $B_\infty^2 B_1^2$ is surjective.\\

Since $a_1^2=a,a_0^2=a+1$ and $a_\infty^2=a+1+b$ for positive integers
$a,b$, one may consider the corresponding $4\times 4$, 
$5\times 5$ and $3\times 3$ block presentations of the form
\begin{equation*}
\renewcommand{\arraystretch}{0.8}%
{\scriptsize
\tau_1=\left(
\begin{array}{c@{\hspace{2bp}}c@{\hspace{2bp}};{2pt/2pt}c@{\hspace{2bp}}c@{\hspace{2bp}}c}
* & * & M & N & P \\ 
0 & 0 & M^\prime & N^\prime & P^\prime \\ \hdashline[2pt/2pt]
* & * & x & y & z \\
* & * & x^\prime & y^\prime & z^\prime \\
* & * & x^{\prime \prime} & y^{\prime \prime} & z^{\prime \prime} \\
\end{array}
\right)
}
\quad , \quad
\renewcommand{\arraystretch}{0.8}%
{\scriptsize
\tau_0=\left(
\begin{array}{c@{\hspace{2bp}}c@{\hspace{2bp}}c
@{\hspace{2bp}};{2pt/2pt}c}
* & * & * & E \\ 
* & * & * & F \\ 
* & * & * & G \\ \hdashline[2pt/2pt]
* & * & * & 0 
\end{array}
\right)
}
\quad , \quad
\renewcommand{\arraystretch}{0.8}%
{\scriptsize
\tau_\infty=\left(
\begin{array}{c@{\hspace{2bp}}
c@{\hspace{2bp}};{2pt/2pt}c@{\hspace{2bp}}c}
0 & 0 & 0 & I \\\hdashline[2pt/2pt]
* & * & * & 0 \\
* & * & * & 0
\end{array}
\right),
}
\end{equation*}
such that $B_\infty^2 B_1^2=\colvec{0&0&I}$. It follows that 
$M^\prime, N^\prime=0$ and $P^\prime=I$. 
On the other hand, it follows from  $A_1^2B_1^2+B_1^2D_1^2=0$ 
that $x^{\prime \prime}, y^{\prime \prime},z^{\prime \prime}=0$. 
After replacing the above block forms the matrix 
$\mathfrak{D}=\Dfrak(K_1,K_2)$ takes the form
$$
\left( \begin{smallmatrix}
\star & \star & \star & \star & \star & \star & \star & \star 
& \star & \star & \star  & \star\\
\star & \star & \star & \star & \star & \star & \star & \star 
& \star & \star & \star  &\star\\
0 & 0 & I \otimes I & 0 & 0 & 0 & 0 & 0 & 0 & 0 & 
B_0^1B_\infty^1 \otimes I  & \star\\
\star & \star & \star & \star & \star & \star & \star & 
\star & \star & \star & \star  &\star\\
\star & \star & \star & \star & \star & \star & \star & 
\star & \star & \star & \star  &\star\\
\star & \star & \star & \star & \star & \star & \star & 
\star & \star & \star & \star  &\star\\
\star & \star & \star & \star & \star & \star & \star & 
\star & \star & \star & \star  &\star\\
0 & 0 & B^1_\infty B^1_1 \otimes I & 0 & 0 & 0 & I \otimes X_1^2 
& 0 & 0 & B^1_\infty A^1_1 \otimes I & 0 &\star\\
\star & \star & \star & \star & \star & \star & \star 
& \star & \star & \star & \star  &\star\\
\star & \star & \star & \star & \star & \star & \star & 
\star & \star & \star & \star  &\star\\
0 & 0 & 0 & 0 & 0 & 0 & 0 & 0 & 0 & I \otimes I & 0  &\star\\
0&0&0&0&0&0&0&0&0&0&0&I\otimes I
\end{smallmatrix} \right)
$$
We may then cancel the tenth and twelfth columns against the 
eleventh and twelfth rows. Moreover, we  may
 add $B^1_\infty B^1_1 \otimes I$ times the third row to the 
eighth row  to arrive at a new equivalent matrix. 
Considering the eighth row of the aforementioned matrix,
we observe that $\text{Coker}(\mathfrak{D})$ contains a 
subspace isomorphic to 
$\text{Coker}(X_1^1 B_\infty^1) \otimes \text{Coker}(X_1^2)$,
independent from the previously identified subspace of 
$\Coker(\Dfrak)$. \\

Part $(2-a)$ of Lemma \ref{lem:19} implies that there is a
subspace of $\Ker(\Dfrak)$ which is isomorphic to 
$\Ker(B_\infty^1 B_1^1)\otimes \Ker(B_1^2B_0^2)$. 
Let us first assume that $B_1^2B_0^2$ is not 
injective. Since $\Ker(B_1^2)$ is at least $2$ dimensional, 
it follows from Remark~\ref{remark} that 
\begin{align*}
&y_\infty^1>\rank\left(\Ker(B_1^1)\oplus
\Ker(B_0^1)\oplus\Ker(B_\infty^1B_1^1)\oplus
\Coker(B_1^1)\oplus\Coker(X_1^1B_\infty^1)\right),
\quad \text{while}\\
&\rank(\Ker(B_\infty^1B_1^1))=a_\infty^1-\rank(B_\infty^1B_1^1)
\geq a_\infty^1-\min\{r_\infty^1,r_1^1\}\quad\text{and}\\
&\rank(\Coker(X_1^1B_\infty^1))=a_1^1-\rank(X_1^1B_\infty^1)
\geq a_1^1-\min\{r_\infty^1,r_1^1, r_0^1\}.
\end{align*}
The above three inequalities imply
\begin{align*}
2a_\infty^1+a_0^1+a_1^1-2r_0^1-2r_1^1-2\delta_\infty^1&>
2a_\infty^1+2a_1^1+a_0^1-2r_1^1-r_0^1-\min\{r_1^1,r_\infty^1\}-
\min\{r_0^1,r_1^1,r_\infty^1\}.
\end{align*}
This latter inequality simplifies to give
\begin{align*}
\min\{r_1^1,r_\infty^1\}+\min\{r_0^1,r_1^1,r_\infty^1\}&>
a_1^1+r_0^1.
\end{align*}
Since $\min\{r_1^1,r_\infty^1\}
\leq r_\infty^1\leq a_1^1$ 
and $\min\{r_0^1,r_1^1,r_\infty^1\}\leq r_0^1$, the above 
conclusion leads to a contradiction. In particular, it follows that 
$B_1^2B_0^2$ is injective. An argument similar to the one used in the
previous paragraphs then implies that $\Ker(\Dfrak)$ includes a 
subspace isomorphic to $\Ker(B_0^1X_1^1)\otimes \Ker(X_1^2)$,
which intersects trivially the previously identified subspaces of 
$\Ker(\Dfrak)$. In particular, since $\Ker(X_1^2)$ is non-trivial,
the rank $y$ of $\widehat{\HF}(Y(K_1,K_2))$ is not less than the rank
of
\begin{align*}
\Ker(B_1^1)\oplus\Ker(B_0^1)\oplus\Ker(B_0^1X_1^1)\oplus
\Coker(B_1^1)\oplus\Coker(X_1^1B_\infty^1).
\end{align*}
Since $(X_1^1)^2=0$, it follows that the rank of 
$\Ker(B_0^1X_1^1)\oplus\Coker(X_1^1B_\infty^1)$ is at least 
equal to $(a_1^1/2)+(a_1^1/2)=a_1^1$.  We thus conclude
\begin{align*}
a_1^1\geq \sum_{\mathfrak{s}>0} e_\mathfrak{s},\quad\rank(\Ker(B_1^1))\geq 
\sum_{\mathfrak{s}\neq -1,0,1} e_\mathfrak{s},\quad
\rank(\Ker(B_0^1))=e_1\quad\text{and}\quad\rank(\Coker(B_1^1))=e_0.
\end{align*}
These equalities and inequalities imply that
\begin{align*}
y&\geq  \sum_{\mathfrak{s}>0} e_0+e_1+
e_\mathfrak{s}+\sum_{\mathfrak{s}\neq -1,0,1} e_\mathfrak{s}
\geq -e_{-1}+\sum_\mathfrak{s} e_\mathfrak{s}
+\sum_{\mathfrak{s}>0}e_\mathfrak{s}
\geq y_\infty^1-e_{-1}+e_1.
\end{align*}
The above observation finishes the proof if $e_1\geq e_{-1}$.
 We are thus lead to further assume that $e_1(K_1)<e_{-1}(K_1)$.
Let $\ov{K_i}\subset \ov{Y_i}$ denote the mirror of $K_i$, where 
$\ov{Y_i}$ is the manifold $Y_i$ with the opposite orientation. Since 
\[\widehat{\HF}(Y(K_1,K_2))\simeq \widehat{\HF}(\ov{Y(K_1,K_2)})
=\widehat{\HF}(Y(\ov{K_1},\ov{K_2}),\]
it follows that we can replace $K_i$ with $\ov{K_i}$ for 
$i=1,2$ (simultaneously).  The knot Floer complex associated 
with $\ov{K_i}$ is the dual of 
the chain complex associated with $K_i$. It follows that 
$\mathrm{E}_\mathfrak{s}(\ov{K_i})$ is isomorphic to 
$\mathrm{E}_{-\mathfrak{s}}({K_i})$. In particular, 
$e_1(\ov{K_1})>e_{-1}(\ov{K_1})$, and the proof is complete.
\end{proof}
Theorem~\ref{thm:main} may be compared with the result of the second 
author \cite[Theorem 1.1]{Eftekhary-incompressible} 
(c.f. \cite[Theorem 1]{Hedden-Levine}) that splicing the complement
of non-trivial knots inside homology spheres never produces an
$L$-space.

 \bibliographystyle{plain}

\end{document}